\documentclass{amsart}
\usepackage{amsfonts}
\usepackage{amsmath,amssymb}
\usepackage{amsthm}
\usepackage{amscd}
\usepackage{graphics}
\usepackage{graphicx}
{
\theoremstyle{definition}
\newtheorem{Def}{Definition}
\newtheorem{Ex}{Example}
\newtheorem{Rem}{Remark}

}
\newtheorem{Cor}{Corollary}
\newtheorem{Prop}{Proposition}
\newtheorem{Thm}{Theorem}

\begin{document}
\title{Explicit round fold maps on some fundamental manifolds}
\author{Naoki Kitazawa}
\address{Department of Mathematics, Tokyo Institute of Technology, 
2-12-1 Ookayama, Meguro-ku, Tokyo 152-8551, JAPAN \\
Tel +81-(0)3-5734-2205, \\
Fax +81-(0)3-5734-2738,
}
\email{kitazawa.n.aa@m.titech.ac.jp}
\subjclass[2010]{Primary~57R45. Secondary~57N15.}
\keywords{Singularities of differentiable maps; singular sets, fold maps. Differential topology.}
\begin{abstract}
{\it {\rm (}Stable{\rm )} fold maps} are fundamental tools in a generalization of
 the theory of Morse functions on smooth manifolds
 and its application to studies of geometry of smooth manifolds. Constructions of explicit fold maps are important in the theory
 of stable fold maps and have been difficult. 

\ \ \ Succeeding in constructions of explicit fold maps will help us to study geometry of manifolds by using the
 geometric theory of fold maps with good geometric properties. For this purpose, in this paper, we construct
 more new explicit fold maps on some fundamental manifolds. More precisely, we construct stable
 fold maps such that the sets of all the singular values of the maps are concentric
 spheres ({\it round} fold maps). Such maps were introduced in 2013--4 and some examples have been constructed by the author. In this paper, we
 construct new examples and obtain their source manifolds by applying methods which were used in these constructions with additional new algebraic and differential
 topological techniques. 
\end{abstract}
\maketitle

\section{Introduction and terminologies}
{\it Fold maps} are fundamental tools in a generalization of the theory of Morse functions and its application to
 studies of geometry of manifolds, which is defined as a smooth map such that each singular point is of the form
$$(x_1, \cdots, x_m) \mapsto (x_1,\cdots,x_{n-1},\sum_{k=n}^{m-i}{x_k}^2-\sum_{k=m-i+1}^{m}{x_k}^2)$$ for
 two positive integers $m \geq n$ and an integer $0 \leq i \leq \frac{m-n+1}{2}$; note that $i$ is determined
 uniquely for each singular point (we call $i$ the {\it index} of the singular point). Studies of
 such maps were started by Whitney (\cite{whitney}) and Thom (\cite{thom}) in the 1950s. A Morse function
 is regarded as a fold map in the case where $n=1$ holds and for a fold map from
 a closed smooth manifold of dimension $m$ into a smooth manifold of dimension $n$
 without boundary with $m \geq n \geq 1$, the following two hold.
\begin{enumerate}
\item The {\it singular set}, which is defined as the set of all the singular points, and the set of all the singular points of index $i$ are closed smooth submanifolds of dimension $n-1$ of the source manifold. 
\item The restriction map to the singular set is a smooth immersion of codimension $1$.
\end{enumerate}
% We also note that if the restriction map to the singular set is a immersion with normal
% crossings, then it is {\it stable} (stable maps are important in the theory of global singularity; see \cite{golubitskyguillemin} for example). In \cite{kitazawa2}, such
% a fold map is defined as a {\it stable} fold map. \\
Although Morse functions exist densely on all smooth manifolds, there exist families of (closed) smooth manifolds admitting no fold maps into the
 Euclidean space of a dimension. For example, a closed smooth manifold of dimension $k \geq 2$ admits a fold map into the plane if and only if the Euler number of the manifold is even. In
 \cite{eliashberg}, \cite{eliashberg2} and other proceeding papers, more general existence problems for fold maps were studied. As a simplest example, it has been known that
 smooth homotopy spheres of dimension $k$ (not necessarily diffeomorphic to the standard
 sphere $S^k$) admits a fold map into the ${k}^{\prime}$-dimensional Eulidean space ${\mathbb{R}}^{{k}^{\prime}}$ for any integer $1 \leq {k}^{\prime} \leq k$.
   
\ \ \ Since around the 1990s, fold maps with additional conditions have been
 actively studied. For example, in \cite{burletderham}, \cite{furuyaporto}, \cite{saeki2}, \cite{saeki3}, \cite{saekisakuma} and \cite{sakuma}, {\it special
 generic} maps, which are defined as fold maps whose singular points are of the form
$$(x_1, \cdots, x_m) \mapsto (x_1,\cdots,x_{n-1},\sum_{k=n}^{m}{x_k}^2)$$ for two positive integers $m \geq n$, were studied. In \cite{sakuma}, Sakuma studied
 {\it simple} fold maps, which are defined as fold maps such that the inverse images of singular values do not have any connected component with more
 than one singular points (see also \cite{saeki}). In
 \cite{kobayashisaeki}, Kobayashi and Saeki
 investigated topological properties of {\it stable} maps into the plane including fold maps which are stable (for {\it stable} maps, see \cite{golubitskyguillemin} for example). In \cite{saekisuzuoka}, Saeki and Suzuoka
 found good topological properties of manifolds admitting stable maps such that the inverse images of regular values are disjoint unions of spheres.

\ \ \ Later, in \cite{kitazawa2}, {\it round} fold maps, which will be mainly studied in this paper, were introduced. A {\it round} fold map
 is defined as a fold map satisfying the following three.
\begin{enumerate}
\item The singular set is a disjoint union of standard spheres.
\item The restriction map to the singular set is an embedding.
\item The image of the singular set is a disjoint union of spheres embedded concentrically. 
\end{enumerate}
 
For example, some special generic maps on spheres are round fold maps whose singular sets are connected. Any standard sphere whose dimension is $m>1$
 admits such a map into ${\mathbb{R}}^n$ with $m \geq n \geq 2$ and any smooth homotopy sphere whose dimension is larger than $1$ and not $4$ admits
 such a map into the plane. For some pair of dimensions $m \geq n \geq 2$, some $m$-dimensional smooth homotopy spheres do not admit such
 maps into ${\mathbb{R}}^n$ even though they admit fold maps into the Euclidean space as mentioned in the presentation of existence problems before. See also \cite{saeki2} and Example \ref{ex:1} of the present paper.

\ \ \ In \cite{kitazawa3}, homology groups and
 homotopy groups of manifolds admitting round fold maps are studied. In \cite{kitazawa} and
 \cite{kitazawa4}, under appropriate conditions, the homeomorphism and diffeomorphism types of
 manifolds admitting round fold maps were studied under appropriate conditions. Moreover, explicit examples of (round) fold
 maps on these manifolds have been constructed in the proofs of the results. By the way, it is a fundamental and difficult problem in the theory of fold maps to construct explicit
 fold maps on explicit manifolds, although existence problems for fold maps into Euclidean spaces on these manifolds
 have been solved in the studies of general existence problems explained before. In the present paper, we obtain
 more new examples of round fold maps with the homeomorphism and diffeomorphism types of the source manifolds by applying methods used in these
 constructions with new algebraic and differential topological techniques.

\ \ \ This
 paper is organized as follows.
% \ \ \ This paper is organized as the following. \\ 
% \ \ \ Section \ref{sec:2} is for preliminaries. We recall {\it fold maps}. We also recall {\it special generic} maps and {\it simple} fold maps. We also
% review {\it the Reeb space} of a smooth
% map, which is defined as the space consisting of all the connected components of
% the fibers of the map. \\

\ \ \ In section \ref{sec:2}, we recall {\it round} fold maps and some terminologies
 on round fold maps such as {\it axes} and {\it proper cores}. We also recall a {\it $C^{\infty}$ trivial} round fold map, which is defined as a round
 fold map whose differential topological structure satisfies a kind of triviality.

\ \ \ In section \ref{sec:3}, we study the homeomorphism and diffeomorphism types of manifolds admitting round fold maps. Mainly, we
 give new examples of round fold maps with the diffeomorphism types of their source manifolds.

\ \ \ First, under appropriate conditions, we construct a new round fold map on a manifold represented as a connected sum
 of two closed and connected $m$-dimensional manifolds admitting round fold maps into ${\mathbb{R}}^n$ ($m \geq n \geq 2$) with $m> n$ in Proposition \ref{prop:1}, which has been
 shown in \cite{kitazawa}, \cite{kitazawa2} and \cite{kitazawa4} under the assumption that $m \geq 2n$ holds. As an application, we
 construct round fold maps into ${\mathbb{R}}^4$ on
 $7$-dimensional smooth homotopy spheres by applying this construction with known facts on $7$-dimensional homotopy spheres of \cite{eellskuiper}, \cite{kervairemilnor} and \cite{milnor} (Theorem \ref{thm:1}).
 Second, we introduce another easy application of Proposition \ref{prop:1} as Theorem \ref{thm:2}. Third, by
 applying our Proposition \ref{prop:1}, on a manifold represented as a connected
 sum of a manifold having the structure of a bundle over the standard $n$-dimensional sphere $S^n$ with $m-n \geq 1$ and $n \geq 2$ and a
 manifold admitting a round fold map satisfying appropriate differential topological conditions, we construct
 a new round fold map into ${\mathbb{R}}^n$ (Theorem \ref{thm:3}). As an application of this theorem, for example, on a manifold represented as a connected
 sum of a finite number of $m$-dimensional smooth manifolds having the structures of bundles
 over the standard $n$-dimensional sphere $S^n$ whose fibers are diffeomorphic to the ($m-n$)-dimensional standard
 sphere $S^{m-n}$ under the condition that $m-n \geq 1$ and $n \geq 2$ hold, we construct
 a round fold map into ${\mathbb{R}}^n$ (Theorem \ref{thm:5} \ref{thm:5.1}), which has been obtained also in the three
 listed papers by the author under the assumption that $m \geq 2n$ holds and by considering this result and known facts on $7$-dimensional homotopy spheres of \cite{eellskuiper}
 again, we classify $7$-dimensional homotopy spheres admitting such round fold maps whose singular sets consist of one, two and three connected components, respectively in Theorem \ref{thm:6}. Last, under
 appropriate conditions a bit different from the previous conditions, we construct a new round fold map on a manifold represented as a connected sum of
 two closed and connected manifolds admitting round fold maps
 again in Proposition \ref{prop:2}. As applications, we show Theorem \ref{thm:7}, which states that a manifold represented as a connected sum of a smooth homotopy sphere
 of dimension $7$ and a $7$-dimensional smooth manifold admitting a round fold map into ${\mathbb{R}}^4$ admits
 another round fold map into ${\mathbb{R}}^4$ and Theorems \ref{thm:8} and \ref{thm:9}, which state
 that for the pair $(m,n)=(3,2),(7,4),(15,8)$, on $m$-dimensional manifolds represented as connected sums of manifolds
 admitting round fold maps into ${\mathbb{R}}^n$ with additional
 appropriate algebraic and differential topological conditions, we can construct new round fold maps. 

\ \ \ In the last section \ref{sec:4}, under appropriate assumptions, we show that for two closed and connected $m$-dimensional manifolds
 admitting $C^{\infty}$ trivial round fold map into ${\mathbb{R}}^n$ with $m>n \geq 2$ assumed, we can construct a $C^{\infty}$ trivial
 round fold map from a manifold represented as a connected sum of the manifolds into ${\mathbb{R}}^n$ (Theorems \ref{thm:10}-\ref{thm:12}).

\ \ \ We note about terminologies on spaces and maps in this paper.

\ \ \ On a topological space $X$, we denote the {\it identity map} on $X$ by ${\rm id}_X$. If the space $X$ is a topological manifold, then we denote the {\it interior} of $X$ by ${\rm Int} X$, the {\rm closure} of $X$ by $\overline{X}$ and the {\it boundary} of $X$ by $\partial X$. For two topological spaces $X_1$ and $X_2$, we denote their {\it disjoint union} by $X_1 \sqcup X_2$. For
 a map $c:X_1 \rightarrow X_2$ and subspaces $Y_1 \subset X_1$ and $Y_2 \subset X_2$ such that $c(Y_1) \subset Y_2$ holds, $c {\mid}_{Y_1}:Y_1 \rightarrow Y_2$ is the {\it restriction map} of $c$ to $Y_1$. For
 a homeomorphism $\phi:Y_2 \rightarrow Y_1$ in the same situation, by gluing
$X_1$ and $X_2$ together by $\phi$, we obtain a new topological space and denote the space by $X_1 {\bigcup}_{\phi} X_2$. 
% the disjoint union $X_1 \sqcup X_2$
%
% defined by the equivalence relation ${\sim}_{\phi}$ defined as the following. 
%
% \begin{center}
% $p_1 {\sim}_{\phi} p_2$ $\Leftrightarrow$ $p_1=p_2$ (where $p_1, p_2 \in X_1$ or $p_1, p_2 \in X_2$) 
% or $\phi(p_2)=p_1$ (where $p_1 \in X_1$ and $p_2 \in X_2$)
% or $\phi(p_1)=p_2$ (where $p_1 \in X_2$ and $p_2 \in X_1$).
% \end{center}
% We can prove that the relation is an equivalence relation easily.
We often omit $\phi$ of $X_1 {\bigcup}_{\phi} X_2$ and denote it by $X_1 {\bigcup} X_2$ in case we consider a natural identification. As before, for
 a smooth map $c$, we define the {\it singular set} of $c$ by the set consisting
 of all the singular points of $c$ and denote this set by $S(c)$. In addition, as before, for the map $c$, we call
 the set $c(S(c))$ the {\it singular value set} of $c$ and a {\it regular fiber} of $c$ means the fiber of a point of a regular value of the map.

\ \ \ Throughout this paper, we
 assume that $M$ is a closed smooth manifold of dimension $m$, that $N$ is a smooth manifold of
 dimension $n$ without boundary, that $f:M \rightarrow N$ is a smooth map and that $m \geq n \geq 1$ holds. In the proceeding sections, manifolds, maps between manifolds and (closed) tubular neighborhoods of submanifolds of manifolds are of class $C^{\infty}$ and in addition, for bundles whose fibers are
 ($C^{\infty}$) manifolds, the structure groups consist of ($C^{\infty}$) diffeomorphisms on the fibers unless otherwise stated. Moreover, for a manifold $X$, an {\it $X$-bundle} means a bundle whose fiber is diffeomorphic to $X$.  

\ \ \ This paper is partially based on the doctoral dissertation of the author \cite{kitazawa2}. For example, Theorems \ref{thm:1}, \ref{thm:3}, \ref{thm:4}, \ref{thm:5} and \ref{thm:7} are also
 in the doctoral dissertation.

\section{Preliminaries on round fold maps}
\label{sec:2}

In this section, we review {\it round} fold maps. See also \cite{kitazawa2} and \cite{kitazawa3} for example. \\
\ \ \ First we recall {\it $C^{\infty}$ equivalence}. For two $C^{\infty}$ maps $f_1:X_1 \rightarrow Y_1$ and $f_2:X_2 \rightarrow Y_2$, we say
 that they are {\it $C^{\infty}$ equivalent} if there exist diffeomorphisms
 ${\phi}_X:X_1 \rightarrow X_2$ and ${\phi}_Y:Y_1 \rightarrow Y_2$ such that the following diagram commutes.

$$
\begin{CD}
X_1 @> {\phi}_X  >>X_2 \\
@VV f_1 V @VV f_2 V\\
Y_1 @> {\phi}_Y  >> Y_2
\end{CD}
$$

For $C^{\infty}$ equivalence, see also \cite{golubitskyguillemin} for example.

\begin{Def}[round fold map(\cite{kitazawa2})]
\label{def:1}
$f:M \rightarrow {\mathbb{R}}^n$ ($n \geq 2$) is said to be a {\it round} fold map if $f$ is $C^{\infty}$ equivalent to
 a fold map $f_0:M_0 \rightarrow {\mathbb{R}}^n$ on a closed manifold $M_0$ such that the following three hold.

\begin{enumerate}
\item The singular set $S(f_0)$ is a disjoint union of ($n-1$)-dimensional standard spheres and consists of $l \in \mathbb{N}$ connected components.
\item The restriction map $f_0 {\mid}_{S(f_0)}$ is an embedding.
\item Let ${D^n}_r:=\{(x_1,\cdots,x_n) \in {\mathbb{R}}^n \mid {\sum}_{k=1}^{n}{x_k}^2 \leq r \}$. Then, $f_0(S(f_0))={\sqcup}_{k=1}^{l} \partial {D^n}_k$ holds.  
\end{enumerate}

We call $f_0$ a {\it normal form} of $f$. We call a ray $L$ from $0 \in {\mathbb{R}}^n$ an {\it axis} of $f_0$ and
 ${D^n}_{\frac{1}{2}}$ the {\it proper core} of $f_0$. Suppose that for a round fold map $f$, its normal form $f_0$ and diffeomorphisms
 $\Phi:M \rightarrow M_0$ and $\phi:{\mathbb{R}}^n \rightarrow {\mathbb{R}}^n$, the relation $\phi \circ f=f_0 \circ \Phi$ holds. Then,
 for an axis $L$ of $f_0$, we also call ${\phi}^{-1}(L)$ an {\it axis} of $f$ and for the proper core ${D^n}_{\frac{1}{2}}$ of $f_0$, we
 also call ${\phi}^{-1}({D^n}_{\frac{1}{2}})$ a {\it proper core} of $f$. 
\end{Def}

For a round fold map $f:M \rightarrow {\mathbb{R}}^n$ and for any connected component $C$ of the singular value set of $f(S(f))$, there exists a small
 smooth closed tubular neighborhood $N(C)$ regarded as a product bundle $C \times [-1,1]$ over $C$ such that the composition
 of the restriction map to the set $f^{-1}(N(C))$ of $f$ and the projection onto $C$ is a submersion and gives $f^{-1}(N(C))$ the structure
 of a bundle over $C$ whose fiber is a compact manifold. Especially, if $C$ is the image of a connected component of the singular set consisting of points
 of index $0$, then the resulting bundle is a bundle whose fiber is the ($m-n+1$)-dimensional standard closed disc $D^{m-n+1}$ and whose structure
 group consists of linear transformations. 

\ \ \ In this paper, we call a bundle whose fiber is a standard disc (sphere) and whose structure group consists of linear
 transformations a {\it linear} bundle such as this bundle. For an integer $k \geq 2$, we denote the $k$-th special linear group, which is regarded as the group of
 all the linear transformations on the disc $D^{k}$ (the sphere $S^{k-1}$), by $SO(k)$. \\
 
\ \ \ Let $f$ be a normal form of a round fold map and $P_1:={D^{n}}_{\frac{1}{2}}$. We set $E:=f^{-1}(P_1)$ and $E^{\prime}:=M-f^{-1}({\rm Int} P_1)$. We set
 $F:=f^{-1}(p)$
 for $p \in \partial P_1$. We put $P_2:={\mathbb{R}}^n-{\rm Int} P_1$. Let $f_1:=f {\mid}_{E}:E \rightarrow P_1$ if $F$ is non-empty and let $f_2:=f {\mid}_{{E}^{\prime}}:{E}^{\prime} \rightarrow P_2$.

\ \ \ $f_1$ gives the structure of a trivial bundle over $P_1$ and ${f_1} {\mid}_{\partial E}:\partial E \rightarrow \partial P_1$ gives
 the structure of a trivial bundle
 over $\partial P_1$ if $F$ is non-empty. $f_2 {\mid}_{\partial {E}^{\prime}}:\partial {E}^{\prime} \rightarrow \partial P_2$
 gives the structure of a trivial bundle over $\partial P_2$.

\ \ \ We can give ${E}^{\prime}$ the structure of a bundle over $\partial P_2$ as follows. 

\ \ \ Since for ${\pi}_P(x):=\frac{1}{2} \frac{x}{|x|}$ ($x \in P_2$), ${\pi}_P \circ f {\mid}_{{E}^{\prime}}$ is
 a proper submersion, this map gives ${E}^{\prime}$ the structure of
 a $f^{-1}(L)$-bundle over $\partial P_2$ (apply Ehresmann's fibration
 theorem \cite{ehresmann}). We call this bundle the {\it surrounding bundle} of $f$. Note that the structure group of this bundle
 is regarded as the group of diffeomorphisms on $f^{-1}(L)$ preserving the function $f {\mid}_{f^{-1}(L)}:f^{-1}(L) \rightarrow L (\subset \mathbb{R})$, which
 is naturally regarded as a Morse function.

\ \ \ For a round fold map $f$ which is not a normal form, we can consider similar objects. We call
 a bundle naturally corresponding to the surrounding bundle of a normal form of $f$ a {\it surrounding bundle} of $f$.
% \ \ \ Let $\Phi:\partial {E}^{\prime} \rightarrow \partial E$
% and $\phi:\partial P_2 \rightarrow \partial P_1$ be diffeomorphisms
% such that the follwing diagram commutes.
%
% $$
% \begin{CD}
% \partial {E}^{\prime} @> \Phi >> \partial E \\
% @VV f_2 {\mid}_{\partial {E}^{\prime}} V @VV f_1 {\mid}_{\partial E} V \\
% \partial P_2 @> \phi >> \partial P_1
% \end{CD}
% $$
%
% \ \ \ We often use a $C^{\infty}$ map constructed by decomposing $f$ into $f_1$, $f_2$ and glueing $f_1$ and $f_2$ together
% by $(\Phi,\phi)$; it is denoted by $f_1 {\bigcup}_{\Phi,\phi} f_2$. \\
%
% \begin{Rem}
% \label{rem:4}
% $f$ and $f_1 {\bigcup}_{\Phi,\phi} f_2$ may not be $C^{\infty}$-equivalent. \\
% \ \ \ For example, in \cite{yamamoto}, Yamamoto found a pair of two special
% generic maps from a $C^{\infty}$ homotopy sphere into ${\mathbb{R}}^2$ which are not $C^{\infty}$-equivalent. Notice that
% we may assume they are round fold maps whose singlar sets are connected: we set one $f:=f_1 \bigcup f_2$
% and for a pair of $C^{\infty}$-diffeomorphisms $(\Phi,\phi)$, the other is denoted by $f_1 {\bigcup}_{\Phi,\phi} f_2$. We also note that
% $f$ and $f_1 {\bigcup}_{\Phi,\phi} f_2$ are $C^{0}$ equivaent. 
% \end{Rem}
\ \ \ We can define the following condition for a round fold map.

\begin{Def}
\label{def:2}
Let $f:M \rightarrow {\mathbb{R}}^n$ ($n \geq 2$) be a round fold map.   
% so that for diffeomorphisms
% $\Phi:\partial {E}^{\prime} \rightarrow \partial E$, $\phi:\partial {C}^{\prime} \rightarrow \partial ({\mathbb{R}}^n-{\rm Int} )$,
% $f$ is $C^{\infty}$-equivalent to $f_1 {\bigcup}_{\Phi,\phi} f_2$. 
% Let $i_1:\partial f^{-1}(P_1) \rightarrow E$, $i_2:f^{-1}(L) \rightarrow E$, 
% ${i_1}^{\prime}:{\bar{f}}^{-1}(\partial P_1) \rightarrow q_f(E)$, ${i_2}^{\prime}:{\bar{f}}^{-1}(L) \rightarrow q_f(E)$ be natural inclusions.
If a surrounding bundle of $f$ as above 
% the natural projection ${\pi}_P \circ f {\mid}_{{E}^{\prime}}$ from the total space of the bundle ${E}^{\prime}$ onto the base space $\partial P_2$ ($\partial P_2$ is diffeomorphic to $S^{n-1}$)
 is a trivial bundle, then
 $f$ is said to be {\it $C^{\infty}$ trivial}. \\
\end{Def}

We introduce a fundamental example of round fold maps.

\begin{Ex}
\label{ex:1}
 Let $m,n$ be integers such that $m \geq n \geq 2$ holds. Then, by a fundamental discussion of \cite{saeki2}, a round
 fold map $f:S^m \rightarrow {\mathbb{R}}^n$ whose singular set is connected exists. The map is special generic. 
 Furthermore, any homotopy sphere of dimension $m>1$ admits a map into the plane
 as above unless $m=4$ according to a discussion in section 5 of \cite{saeki2}. Round fold maps here
 are $C^{\infty}$ trivial (see also Example 3 (1) of \cite{kitazawa2}).

\ \ \ Let $m \geq 4$ and let $n$ be an integer satisfying $m-n=1,2,3$. In section 4 of \cite{saeki2} and \cite{saeki3}, it is shown that
 if a homotopy sphere of dimension $m$ admits a special generic map into ${\mathbb{R}}^n$, then the sphere is diffeomorphic to $S^m$. Thus, on
 a homotopy sphere of dimension $m$, a round fold map into ${\mathbb{R}}^n$ whose singular set is connected exists, then the homotopy sphere is diffeomorphic to $S^m$.  
\end{Ex}

\ \ \ We easily obtain a lot of round fold maps which are $C^{\infty}$ trivial by using the following method (see also \cite{kitazawa2} and \cite{kitazawa3} for example).

\ \ \ Let $\bar{M}$ be a compact manifold with non-empty boundary $\partial \bar{M}$. Let $a \in \mathbb{R}$. Then, there exists a Morse
 function $\tilde{f}:\bar{M} \rightarrow [a,+\infty)$ satisfying the following two. 
\begin{enumerate}
\item $a$ is the minimum of $\tilde{f}$ and ${\tilde{f}}^{-1}(a)=\partial \bar{M}$ holds.
\item All the singular points of $\tilde{f}:\bar{M} \rightarrow [a,+\infty)$ are in $\bar{M}-\partial \bar{M}$ and at distinct singular points, the values are always distinct.
\end{enumerate}
Let $\Phi:\partial (\bar{M} \times \partial ({\mathbb{R}}^n-{\rm Int} D^n)) \rightarrow \partial (\partial \bar{M} \times D^n)$
 and $\phi:\partial ({\mathbb{R}}^n-{\rm Int} D^n) \rightarrow \partial D^n$ be diffeomorphisms. Let $p_1:\partial \bar{M} \times \partial ({\mathbb{R}}^n-{\rm Int} D^n) \rightarrow \partial ({\mathbb{R}}^n-{\rm Int} D^n)$ and
 $p_2:\partial \bar{M} \times \partial D^n \rightarrow \partial D^n$ be
 the canonical projections. Suppose that the following diagram commutes. 

$$
\begin{CD}
\partial \bar{M} \times \partial ({\mathbb{R}}^n-{\rm Int} D^n)  @> \Phi >> \partial \bar{M} \times \partial D^n \\
@VV p_1 V @VV p_2 V \\
\partial ({\mathbb{R}}^n-{\rm Int} D^n) @> \phi >> \partial D^n
\end{CD}
$$

By using the diffeomorphism $\Phi$, we construct $M:=(\partial \bar{M} \times D^n) {\bigcup}_{\Phi} (\bar{M} \times \partial ({\mathbb{R}}^n-{\rm Int} D^n))$. 
Let $p:\partial \bar{M} \times D^n \rightarrow D^n$ be the canonical projection. Then gluing the two
 maps $p$ and $\tilde{f} \times {{\rm id}}_{S^{n-1}}$ together by using the two diffeomorphisms $\Phi$ and $\phi$,
 we obtain a round fold map $f:M \rightarrow {\mathbb{R}}^n$.

\ \ \ If $\bar{M}$ is a compact manifold without boundary, then there exists a Morse
 function $\tilde{f}:\bar{M} \rightarrow [a,+\infty)$ such that $\tilde{f}(\bar{M}) \subset (a,+\infty)$ and that at distinct singular points, the values are always distinct. We are enough
 to consider $\tilde{f} \times {\rm id}_{S^{n-1}}$
 and embed $[a,+\infty) \times S^{n-1}$ into ${\mathbb{R}}^n$ to construct a round fold map whose source manifold is $\bar{M} \times S^{n-1}$.

\ \ \ We call this construction of a round fold map a {\it trivial spinning construction}.  

\section{New examples of round fold maps}
\label{sec:3}

In this section, we give new examples of round fold maps with their source manifolds.

\ \ \ In this section and the next section, we define a {\it trivial} embedding of the standard sphere $S^p$ ($p \geq 1$) into a smooth manifold $X$ of dimension $q>p$ without boundary as a
 smooth embedding smoothly isotopic to an embedding into a smoothly embedded open disc ${\rm Int} D^q \subset X$ which is
 unknot in the $C^{\infty}$ category. 

\begin{Prop}
\label{prop:1}
Let $M_1$ and $M_2$ be closed and connected $m$-dimensional manifolds. Assume that two round fold maps $f_1:M_1 \rightarrow {\mathbb{R}}^n$ and $f_2:M_2 \rightarrow {\mathbb{R}}^n$ {\rm (}$m > n \geq 2${\rm )} exist. We
 also assume the following two.
\begin{enumerate}
\item The fiber of a point in a proper core of $f_1$ has a connected component diffeomorphic to $S^{m-n}$. 
\item Let $C$ be the connected component of $\partial f_2(M_2)$ bounding the unbounded connected component of ${\mathbb{R}}^n-{\rm Int} f_2(M_2)$.
 Then, the embedding of ${f_2}^{-1}(C)$ into $M_2$ is a trivial embedding
 into $M_2$.
\end{enumerate}
 Then, on any manifold $M$ represented as a connected sum of the manifolds $M_1$ and $M_2$, there
 exists a round fold map $f:M \rightarrow {\mathbb{R}}^n$ satisfying the following two.
\begin{enumerate}
\item Let $P_1$ be a proper core of $f_1$. There exists an $n$-dimensional standard closed disc $Q \subset {\mathbb{R}}^n$ and
 the restriction map $f_1 {\mid}_{{f_1}^{-1}({\mathbb{R}}^n-{\rm Int} P_1)}:{f_1}^{-1}({\mathbb{R}}^n-{\rm Int} P_1) \rightarrow {\mathbb{R}}^n-{\rm Int} P_1$
 and $f {\mid}_{f^{-1}({\mathbb{R}}^n-{\rm Int} Q)}:f^{-1}({\mathbb{R}}^n-{\rm Int} Q) \rightarrow {\mathbb{R}}^n-{\rm Int} Q$ are $C^{\infty}$ equivalent.
\item Let $P_2$ be a small closed tubular neighborhood of the connected component $C$ of $f_2(S(f_2))$. Then
 ${f_2} {\mid}_{{f_2}^{-1}({\mathbb{R}}^n-{\rm Int} P_2)}$ and
 $f {\mid}_{f^{-1}(Q)}:f^{-1}(Q) \rightarrow Q$ are $C^{\infty}$ equivalent.
\end{enumerate}
\end{Prop}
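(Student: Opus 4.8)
The plan is to build $f$ by excising a distinguished trivial piece over a small disc inside the proper core of $f_1$ and grafting in the inner part of $f_2$, and then to identify the resulting source manifold with a connected sum. First I would replace $f_1$ and $f_2$ by normal forms; this is harmless because Definition \ref{def:1} and the conclusion are phrased up to $C^{\infty}$ equivalence. Over the proper core $P_1={D^n}_{\frac{1}{2}}$ the map $f_1$ is a trivial bundle whose fibre is the proper core fibre, and by hypothesis (1) this fibre has a connected component diffeomorphic to $S^{m-n}$. Fixing a small standard closed disc $Q\subset{\rm Int}\,P_1$, I obtain over $Q$ a distinguished trivial summand $S^{m-n}\times Q\cong S^{m-n}\times D^n$ whose boundary is the trivial bundle $S^{m-n}\times\partial Q=S^{m-n}\times S^{n-1}$.

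Next I would analyse $f_2$ near the outermost component $C$ of $f_2(S(f_2))$. Since $C$ bounds the unbounded component of ${\mathbb{R}}^n-{\rm Int}\,f_2(M_2)$ and no point of $M_2$ maps outside $C$, the fold over $C$ must be of index $0$; hence ${f_2}^{-1}(C)$ is the $(n-1)$-sphere $S(f_2)\cap{f_2}^{-1}(C)$ and the regular fibre just inside $C$ is exactly $S^{m-n}$. Hypothesis (2), that ${f_2}^{-1}(C)$ is trivially embedded, then lets me trivialise the tubular neighbourhood ${f_2}^{-1}(P_2)$ as $S^{n-1}\times D^{m-n+1}$, so that the inner part ${f_2}^{-1}(D_{{\rm in}})$ over the bounded component $D_{{\rm in}}$ of ${\mathbb{R}}^n-{\rm Int}\,P_2$ has boundary the trivial bundle $S^{n-1}\times S^{m-n}\cong S^{m-n}\times S^{n-1}$, with $M_2={f_2}^{-1}(D_{{\rm in}}){\bigcup}_{S^{m-n}\times S^{n-1}}(S^{n-1}\times D^{m-n+1})$. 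The two boundaries now agree, so I can define $f$ by deleting ${\rm Int}(S^{m-n}\times Q)$ from the source of $f_1$, gluing in ${f_2}^{-1}(D_{{\rm in}})$ along $S^{m-n}\times S^{n-1}$, and letting $f$ coincide with $f_1$ over ${\mathbb{R}}^n-{\rm Int}\,Q$ and with a radial reparametrisation of $f_2$ over $Q$. Choosing the gluing diffeomorphism to respect the radial Morse and surrounding-bundle structure keeps $f$ a fold map whose singular value set is a union of concentric spheres, namely those of $f_1$ outside $Q$ together with those of $f_2$ rescaled inside $Q$, so $f$ is a round fold map, and conclusions (1) and (2) then read off from the construction.

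The real work is to see that the source of $f$ is a connected sum $M_1\#M_2$, and this is precisely where hypothesis (2) replaces the earlier dimension restriction $m\geq 2n$. The key topological identity is
$$(S^{m-n}\times D^n){\bigcup}_{S^{m-n}\times S^{n-1}}(S^{n-1}\times D^{m-n+1})=\partial(D^n\times D^{m-n+1})=S^m.$$
Because ${f_2}^{-1}(C)$ is trivially embedded, its tubular neighbourhood $S^{n-1}\times D^{m-n+1}$ lies inside a coordinate ball $D^m\subset M_2$, and deleting it from that ball leaves a piece diffeomorphic to $(S^{m-n}\times D^n)$ with a small open ball removed. Feeding this into the graft identifies $A{\bigcup}_{S^{m-n}\times S^{n-1}}{f_2}^{-1}(D_{{\rm in}})$, where $A$ is the source of $f_1$ with ${\rm Int}(S^{m-n}\times Q)$ deleted, with $(M_1-{\rm Int}\,D^m){\bigcup}_{S^{m-1}}(M_2-{\rm Int}\,D^m)=M_1\#M_2$.

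I expect the main obstacles to be twofold. First, one must choose the gluing diffeomorphisms compatibly with the fibrewise Morse structure of the surrounding bundles, so that no spurious singularities appear along $S^{m-n}\times S^{n-1}$ and the concentric form is preserved; this is what makes $f$ genuinely a fold map rather than merely a continuous one. Second, when the proper core fibre of $f_1$ has components beyond the distinguished $S^{m-n}$, these persist as a trivial factor over $Q$, and I would need to track them carefully so that the $C^{\infty}$ equivalences asserted in (1) and (2) hold on the nose, restricting attention to the distinguished summand where necessary.
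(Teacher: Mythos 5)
Your proposal is correct and follows essentially the same route as the paper: you excise the trivial $S^{m-n}\times D^n$ piece over (a disc in) the proper core of $f_1$, excise the trivial $S^{n-1}\times D^{m-n+1}$ neighbourhood of the index-$0$ fold over the outermost component $C$ from $M_2$, glue along the common boundary $S^{m-n}\times S^{n-1}$, and use the decomposition $S^m=\partial(D^n\times D^{m-n+1})$ together with the trivial-embedding hypothesis (placing the tubular neighbourhood of ${f_2}^{-1}(C)$ inside a coordinate ball) to recognise the result as $M_1\#M_2$ --- exactly the chain of diffeomorphisms in the paper's proof. The points you flag as remaining work (compatibility of the gluing with the fibrewise Morse structure, and tracking the other components of the proper-core fibre) are treated at the same level of detail in the paper itself.
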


\begin{proof}
This proposition is also shown in \cite{kitazawa}, \cite{kitazawa2} and \cite{kitazawa4} under the assumptions that $m \geq 2n$ holds and that
 the embedding ${f_2}^{-1}(C) \subset M_2$ is null-homotopic (and trivial as a result). We prove this proposition as the review of these proofs.

\ \ \ Let $P_1$ be a proper core of $f_1$ and $P_2$ be a small closed tubular neighborhood of the connected component $C$ of $f_2(S(f_2))$. Let
 $V_1$ be a connected component of ${f_1}^{-1}(P_1)$ such that $f_1 {\mid}_{V_1}:V_1 \rightarrow P_1$ gives the structure of a trivial $S^{m-n}$-bundle over
 $P_1$ and $V_2:={f_2}^{-1}(P_2)$. $V_2$ is a closed tubular neighborhood of ${f_2}^{-1}(C) \subset M_2$ and $V_2$ has the structure of
 a trivial linear $D^{m-n+1}$-bundle over $C$ by the assumptions that $C$ is the connected component of $\partial f_2(M_2)$ bounding the unbounded connected component of ${\mathbb{R}}^n-{\rm Int} f_2(M_2)$ and the image
 of a connected component of the singular set consisting of singular points of index $0$ and
 that the embedding ${f_2}^{-1}(C) \subset M_2$ is a trivial embedding
 into $M_2$. Note also that $f_2 {\mid}_{\partial V_2}$ gives
 the structure of a subbundle of the bundle.

\ \ \ For any diffeomorphism
 $\Psi:\partial D^m \rightarrow \partial D^m$ extending to a diffeomorphism on $D^m$ or from $M_2-(M_2-D^m)$
 onto $M_1-(M_1-D^m)$, we may ragard that there exists a diffeomorphism $\Phi:\partial V_2 \rightarrow \partial V_1$ regarded as a bundle
 isomorphism between the two trivial $S^{m-n+1}$-bundles over the ($n-1$)-dimensional standard spheres inducing a diffeomorphism
 between the base spaces and that the following relation holds, where for two smooth manifolds $X_1$ and $X_2$, $X_1 \cong X_2$ means
 that $X_1$ and $X_2$ are diffeomorphic.

\begin{eqnarray*}
& & (M_1-{\rm Int} V_1) {\bigcup}_{\Phi} (M_2-{\rm Int} V_2) \\
& \cong & (M_1-{\rm Int} V_1) {\bigcup}_{\Phi} ((D^m-{\rm Int} V_2) \bigcup (M_2-{\rm Int} D^m)) \\
& \cong & (M_1-{\rm Int} V_1) {\bigcup}_{\Phi} ((S^m-({\rm Int} V_2 \sqcup {\rm Int} D^m)) {\bigcup}_{\Psi} (M_2-{\rm Int} D^m)) \\
& \cong & (M_1-{\rm Int} D^m) {\bigcup}_{\Psi} (M_2-{\rm Int} D^m)
\end{eqnarray*}

\ \ \ This means that the resulting manifold is represented as a connected sum $M$ of $M_1$ and $M_2$ and that $M$ admits a round fold map $f:M \rightarrow {\mathbb{R}}^n$. More precisely, $f$
 is obtained by gluing the two maps ${f_1} {\mid}_{M_1-{\rm Int} V_1}$ and ${f_2} {\mid}_{M_2-{\rm Int} V_2}$. We also note that we can realize each connected sum
 of the manifolds $M_1$ and $M_2$ as the resulting manifold $M$ and that we obtain a round fold map $f:M \rightarrow {\mathbb{R}}^n$ satisfying the assumption.
\end{proof}

% \begin{figure}
% \includegraphics[width=60mm]{connect.2013.1.22.eps}
% \caption{$P_1, P_2, C \subset {\mathbb{R}}^n$ ($P_1$ is the bounded region bounded by the dotted line in the left figure and $P_2$ is the region bounded by the disjoint union of the dotted lines in the right figure.)}
% \label{fig:2}
% \end{figure}

We call the operation of obtaining the map $f$ from the pair $(f_1,f_2)$ in the proof a canonical combining operation to the pair $(f_1,f_2)$.

\begin{Cor}
\label{cor:1}
Let $M_1$ be a closed and connected manifold of dimension $m$ and $M_2$ be a homotopy sphere of dimension $m$. Let there exist a round fold map $f_1:M_1 \rightarrow {\mathbb{R}}^n$ {\rm (}$n \geq 2${\rm )} such that the fiber
 of a point in a proper core of $f_1$ has a connected component diffeomorphic to $S^{m-n}$ and
 a round fold map $f_2:M_2 \rightarrow {\mathbb{R}}^n$. We also assume that $3n<2m$ holds.

\ \ \ Then, on any manifold $M$ represented as a connected sum of the manifolds $M_1$ and $M_2$, we
 can obtain a round fold map $f:M \rightarrow {\mathbb{R}}^n$ by a canonical combining operation to the pair $(f_1,f_2)$.  
\end{Cor}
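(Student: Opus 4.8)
The plan is to obtain the corollary as a direct application of Proposition~\ref{prop:1}. Hypothesis~(1) of that proposition is precisely the stated condition on the fibre of $f_1$ over a point of a proper core, so the only thing to check is its hypothesis~(2): that ${f_2}^{-1}(C)\subset M_2$ is a trivial embedding, where $C$ is the component of $\partial f_2(M_2)$ bounding the unbounded component of ${\mathbb{R}}^n-{\rm Int} f_2(M_2)$. Since the fibre of $f_2$ is empty just outside $C$, the singular points over $C$ have index $0$; hence, as in the proof of Proposition~\ref{prop:1}, ${f_2}^{-1}(C)$ is the outermost component, a standard sphere $S^{n-1}$, embedded in $M_2$ as the core of a linear $D^{m-n+1}$-bundle neighbourhood (whose triviality we do not yet assume). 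Two numerical consequences will be used: $M_2$, being a homotopy $m$-sphere, is $(m-1)$-connected; and from $3n<2m$ with $n\ge 2$ we get $2m>3n\ge 2n+2$, so $m>n+1$. In particular $n-1<m$, so the embedding $\iota\colon S^{n-1}\hookrightarrow M_2$ is null-homotopic, and its codimension is $m-n+1\ge 3$.

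The essential step is to promote \emph{null-homotopic} to \emph{smoothly isotopic to the unknot in a disc}. For this I would invoke Haefliger's metastable isotopy theorem: when $2q>3(p+1)$, any two homotopic embeddings $S^p\hookrightarrow M^q$ are smoothly isotopic. With $p=n-1$ and $q=m$ this range is exactly $2m>3n$, i.e. the hypothesis $3n<2m$. The standard unknotted $S^{n-1}$ in a coordinate disc $D^m\subset M_2$ is itself null-homotopic and therefore homotopic to $\iota$, so the theorem supplies a smooth isotopy carrying $\iota$ onto this unknot; thus ${f_2}^{-1}(C)\subset M_2$ is trivial in the sense of the present paper, and its normal bundle, being isomorphic to that of the unknot, is automatically trivial. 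With both hypotheses of Proposition~\ref{prop:1} verified, applying the canonical combining operation to $(f_1,f_2)$ yields the desired round fold map $f\colon M\to{\mathbb{R}}^n$ on every connected sum $M$ of $M_1$ and $M_2$.

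The main obstacle I anticipate is applying Haefliger's theorem with the homotopy-sphere target $M_2$ rather than the standard $S^m$, and in particular covering the sub-range $\tfrac{3n}{2}<m\le 2n$. For larger $m$---the range $m\ge 2n$ used in the earlier papers---one can argue by elementary general position, using the null-homotopy to span $\iota$ by an embedded disc and then compressing $\iota$ into a ball. In the smaller range this fails: writing $M_2=D^m\cup_{\phi}D^m$ as a twisted sphere, one cannot isotope $\iota(S^{n-1})$ off the equatorial $S^{m-1}$, since a generic intersection has dimension $(n-1)+(m-1)-m=n-2\ge 0$. Hence the full metastable isotopy theorem for a general manifold target is genuinely required, and citing or establishing that version for the (possibly exotic) homotopy sphere $M_2$ is the crux of the argument.
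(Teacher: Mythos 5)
Your proposal is correct and follows essentially the same route as the paper: the paper likewise verifies hypothesis (2) of Proposition \ref{prop:1} by observing that $3n=3\{(n-1)+1\}<2m$ places the embedding $S^{n-1}\hookrightarrow M_2$ in Haefliger's metastable range, so that any two such embeddings are smoothly isotopic, and then applies the canonical combining operation. The concern you raise about general (possibly exotic) targets is covered by the cited reference \cite{haefliger}, which treats embeddings of manifolds into arbitrary manifolds, not just into $S^m$.
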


\begin{proof}
Since the inequality $3n=3\{(n-1)+1\}<2m$ holds, from the theory of \cite{haefliger} or \cite{haefliger2}, it follows that two embeddings of $S^{n-1}$ into $M_2$
 are always smoothly isotopic. We may apply Proposition \ref{prop:1} to complete the proof.
\end{proof}

% \begin{Thm}
% \label{thm:10}
% We assume that $3n<2m$, $m \geq 6$. Let $M_1$ and $M_2$ be connected and closed $C^{\infty}$ oriented manifolds of dimension $m$. \\
% \ \ \ Assume that there exists a PSSFCMR, $f_1:M_1 \rightarrow {\mathbb{R}}^n$, and
% an ASFMR, $f_2:M_2 \rightarrow {\mathbb{R}}^n$. Suppose that $f_2$ is topologically trivial
% and the rank of $f_2$ is $2$. \\
% \ \ \ Then, for the connected sum of $M_1$ and $M_2$, $M$, there exists an ASFMR $f:M \rightarrow {\mathbb{R}}^n$.  
% \end{Thm}
%
% \begin{proof}
% Let $P_2$ be a small $C^{\infty}$ tubular neighborhood of the connected component of $f_2(S(f_2))$ bounding $f_2(M_2)$. Since $f_2$ is
% topologically trivial and $m \geq 6$, by Theorem \ref{thm:1} ${f_2}^{-1}(P_2)$ is diffeomorphic to
% $S^{n-1} \times D^{m-n+1}$ and embedded in a $C^{\infty}$ submanifold diffeomorphic to $D^m$. Any $C^{\infty}$
% embeddings of $S^{n-1}$ into ${\rm Int} D^m$ are $C^{\infty}$ isotopic since $3n<2m$ is assumed. This means that we can apply the method of the proof
% of Theorem \ref{thm:3}.  
% \end{proof}

We have the following corollary.  

\begin{Cor}
\label{cor:2}
Let $m, n \in \mathbb{N}$, $n \geq 2$ and $3n<2m$.
If a homotopy sphere $\Sigma$ of dimension $m$ is represented as a connected sum
 of a finite number of homotopy spheres
 having the structures of $S^{m-n}$-bundles over $S^n$,
 then there exists a round fold map $f:\Sigma \rightarrow {\mathbb{R}}^n$ such that regular fibers are disjoint unions of finite copies of $S^{m-n}$ and that the number of
 connected components of the singular set and the number of connected components of the fiber of a point
 in a proper core agree.

\end{Cor}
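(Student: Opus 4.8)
The plan is to combine two ingredients: a base construction realizing a single $S^{m-n}$-bundle over $S^n$ as a round fold map with matching numerical data, and the iterated application of Corollary~\ref{cor:1} to pass to the connected sum. For a round fold map $g$, write $s(g)$ for the number of connected components of its singular set and $c(g)$ for the number of connected components of the fiber over a point of a proper core; I aim to produce $f:\Sigma \rightarrow {\mathbb{R}}^n$ with $s(f)=c(f)$ and with every regular fiber a disjoint union of copies of $S^{m-n}$.

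First I would handle a single summand. Let $\Sigma_i$ be one of the homotopy spheres of the hypothesis, so it is the total space of an $S^{m-n}$-bundle over $S^n$. Writing $S^n=D^n \cup_{S^{n-1}} D^n$ and using the local triviality over each hemisphere, I would build a round fold map $g_i:\Sigma_i \rightarrow {\mathbb{R}}^n$ by a folded (spun) construction whose preimage of the proper core is two copies of $S^{m-n} \times D^n$, one from each hemisphere, so that $c(g_i)=2$; these two trivial pieces are joined over the equatorial collar by a surrounding bundle with fiber the cylinder $W=S^{m-n} \times [0,1]$ whose twisting encodes the clutching function of $\Sigma_i$. The associated Morse function on $W$ (the restriction of $g_i$ to the preimage of an axis) can be chosen with exactly two critical points, one tubing the two boundary spheres together and one index-$0$ outermost fold capping off the resulting $S^{m-n}$, so that $s(g_i)=2$, every regular fiber is a disjoint union of copies of $S^{m-n}$, and the outermost singular sphere consists of points of index $0$. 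In particular $s(g_i)=c(g_i)$; and when $\Sigma_i \cong S^m$ one may instead use the connected special generic map with $s=c=1$. In every case $s(g_i)=c(g_i)$.

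Next I would assemble $\Sigma$ by induction on the number $k$ of summands, using the canonical combining operation of Proposition~\ref{prop:1}. Assume a round fold map $f_1$ on the connected sum of the first $i-1$ summands has been produced with $s(f_1)=c(f_1)$ and with all regular fibers disjoint unions of copies of $S^{m-n}$; then the fiber over a proper core of $f_1$ has a component diffeomorphic to $S^{m-n}$, so hypothesis (1) of Proposition~\ref{prop:1} holds. Since $\Sigma_i$ is a homotopy sphere and $3n<2m$, Corollary~\ref{cor:1} shows the remaining hypothesis is automatic and the canonical combining operation applies to $(f_1,g_i)$. By construction it removes one $S^{m-n}$-component from the proper core of $f_1$ and nests all of $g_i$, minus the collar of its outermost (index-$0$) fold, inside; hence over the new proper core one finds the $c(g_i)$ components of $g_i$ together with the $c(f_1)-1$ surviving components of $f_1$, a net increase of $c(g_i)-1$, while the $s(g_i)-1$ inner singular spheres of $g_i$ are added, a net increase of $s(g_i)-1$. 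As $s(g_i)=c(g_i)$, the equality $s=c$ is preserved; and since each handle either caps off an $S^{m-n}$ or joins two copies (and $S^{m-n}$ joined to $S^{m-n}$ is again $S^{m-n}$), all regular fibers remain disjoint unions of copies of $S^{m-n}$.

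After $k-1$ steps this produces the desired $f:\Sigma \rightarrow {\mathbb{R}}^n$. The main obstacle is the base construction: one must check that the folded realization of an arbitrary $S^{m-n}$-bundle over $S^n$ is a genuine round fold map with the prescribed surrounding fiber, that its outermost singular sphere has index $0$ and the relevant sphere is trivially embedded (so Corollary~\ref{cor:1} applies), and that $s$ and $c$ transform identically under the combining operation; granting these, the induction is routine.
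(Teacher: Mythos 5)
Your proposal is correct and follows essentially the same route as the paper: the base maps you build on each bundle summand (two singular components, proper-core fiber two copies of $S^{m-n}$, outermost fold of index $0$) are exactly the maps of Theorem \ref{thm:4} and Example \ref{ex:2}, and the assembly is the inductive canonical combining operation of Proposition \ref{prop:1}, with the trivial-embedding hypothesis discharged via Corollary \ref{cor:1} and the Haefliger condition $3n<2m$. Your explicit bookkeeping that both counts increase by the same amount at each step is a detail the paper leaves implicit, but the argument is the same.
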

\begin{proof}
In the situation of Corollary \ref{cor:1}, we consider round fold maps
 from $m$-dimensional homotopy spheres having the structures of $S^{m-n}$-bundles over $S^n$ into ${\mathbb{R}}^n$ as presented in Example \ref{ex:2} later. The singular set of each map has $2$ connected components and
 the fiber of a point in a proper core of each map is a disjoint union of two copies of $S^{m-n}$. By virtue of Proposition \ref{prop:1}, by using canonical combining operations inductively, we obtain
 a desired round fold map $f:\Sigma \rightarrow {\mathbb{R}}^n$.
\end{proof}

It is well-known that if an $m$-dimensional sphere has the structure of a linear bundle over an $n$-dimensional
 sphere whose fiber is an ($m-n$)-dimensional sphere, then $(m,n)=(3,2),(7,4),(15,8)$ must hold. We note that $3n<2m$ holds for $(m,n)=(7,4)$ and $(m,n)=(15,8)$.

\ \ \ It is also known that $S^3$, $S^7$ and $S^{15}$ have the structures of linear bundles over $S^2$, $S^4$ and $S^8$ whose fibers are $S^1$, $S^3$
 and $S^7$, respectively. In \cite{eellskuiper} and \cite{milnor}, there are
 some examples of $7$-dimensional homotopy spheres not
 diffeomorphic to $S^7$ having the structures of linear $S^3$-bundles over $S^4$.

\ \ \ We have the following theorem.
 
\begin{Thm}
\label{thm:1}
Every homotopy sphere of dimension $7$ admits a round fold map into ${\mathbb{R}}^4$ such that regular fibers are disjoint unions of finite copies of $S^3$ and that the number of
 connected components of the singular set and the number of connected components of the fiber of a point
 in a proper core agree. 
\end{Thm}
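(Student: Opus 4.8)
The plan is to deduce the theorem directly from Corollary \ref{cor:2} by reducing it to a purely group-theoretic statement about $7$-dimensional homotopy spheres. First I would verify the numerical hypotheses of Corollary \ref{cor:2} for $(m,n)=(7,4)$: we have $n=4 \geq 2$ and $3n=12<14=2m$, so the corollary applies and, once the relevant connected-sum decomposition is established, produces exactly a round fold map into ${\mathbb{R}}^4$ whose regular fibers are disjoint unions of finite copies of $S^{m-n}=S^3$ and for which the number of connected components of the singular set equals the number of connected components of the fiber over a point of a proper core. Thus it suffices to show that \emph{every} $7$-dimensional homotopy sphere is a connected sum of a finite number of homotopy spheres carrying the structure of an $S^3$-bundle over $S^4$.

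Second, I would invoke the classical facts recalled in the introduction. By Kervaire--Milnor \cite{kervairemilnor}, the group $\Theta_7$ of oriented $7$-dimensional homotopy spheres under connected sum is cyclic of order $28$. Linear $S^3$-bundles over $S^4$ are classified by ${\pi}_3(SO(4)) \cong \mathbb{Z} \oplus \mathbb{Z}$, and by Milnor \cite{milnor} the total space of such a bundle is a homotopy $7$-sphere precisely when its Euler number is $\pm 1$; the standard sphere $S^7$ itself occurs this way, as the quaternionic Hopf bundle. The crucial input is the Eells--Kuiper invariant \cite{eellskuiper} evaluated on this family: I would use its computation to exhibit one such bundle whose total space ${\Sigma}_0$ represents a generator of $\Theta_7 \cong \mathbb{Z}/28\mathbb{Z}$, equivalently to show that the homotopy spheres realized by these bundles generate $\Theta_7$ under connected sum.

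Third, granting such a generator ${\Sigma}_0$, every element of $\Theta_7$ is realized by a connected sum of finitely many copies of ${\Sigma}_0$, together with copies of $S^7$ if convenient, and each summand is an $S^3$-bundle over $S^4$. Hence every $7$-dimensional homotopy sphere admits a connected-sum decomposition of the required form, and applying Corollary \ref{cor:2} completes the proof.

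The main obstacle is the second step: confirming that the bundle construction is rich enough to generate all of $\Theta_7$ under connected sum. Milnor's $\lambda$-invariant alone only detects residues modulo $7$ and so cannot separate all $28$ classes, which is why the finer Eells--Kuiper invariant is needed. Concretely, writing a clutching function as $u \mapsto (v \mapsto u^{h} v u^{j})$ in unit quaternions with Euler number $h+j=1$, the relevant invariant is a quadratic function of $h$, and I would check that, as $h$ ranges over $\mathbb{Z}$, its values include a unit of $\mathbb{Z}/28\mathbb{Z}$; this is exactly what makes a single generating bundle ${\Sigma}_0$ available and hence forces every homotopy $7$-sphere into the desired decomposition.
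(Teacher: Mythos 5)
Your proposal is correct and follows essentially the same route as the paper: reduce to Corollary \ref{cor:2} via the fact that some linear $S^3$-bundle over $S^4$ has total space generating the group $\Theta_7 \cong \mathbb{Z}/28\mathbb{Z}$, so that every homotopy $7$-sphere is a connected sum of such bundle total spaces. The paper merely asserts the generator statement with citations to \cite{kervairemilnor}, \cite{milnor} and \cite{eellskuiper}, whereas you supply the verification (the quadratic Eells--Kuiper computation, whose attained values --- listed later in the paper's proof of Theorem \ref{thm:6} --- indeed include the unit $1$); this is a welcome filling-in of detail rather than a different argument.
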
 

\begin{proof}
By virtue of the theory of \cite{kervairemilnor} and \cite{milnor}, every homotopy sphere of dimension $7$ is
 represented as a connected sum of a finite number of oriented $7$-dimensional homotopy
 spheres admitting the structures of linear $S^3$-bundles over $S^4$. In fact, we can choose an oriented homotopy sphere of dimension $7$ so that it is a
 generator of the oriented h-cobordism group of $7$-dimensional smooth homotopy spheres. From Corollary \ref{cor:2}, the result follows.
\end{proof}

\ \ \ Theorem \ref{thm:1} states that every homotopy sphere of dimension $7$ admits a round fold map
 into ${\mathbb{R}}^4$, although a homotopy sphere of dimension $7$ not diffeomorphic to $S^7$ does not admit a round fold map
 into ${\mathbb{R}}^4$ whose singular set is connected as in Example \ref{ex:1}.

\ \ \ We also note that all the homotopy spheres admit fold maps into Euclidean spaces whose dimensions are not larger
 than those of the source manifolds (\cite{eliashberg} and \cite{eliashberg2}) and that it has been difficult to
 construct explicit examples of such fold maps as menitoned in the introdution. Theorem \ref{thm:1} gives explicit examples.

\ \ \ As another easy application of constructions performed in the proof of Proposition \ref{prop:1}, we have the following theorem.

\begin{Thm}
\label{thm:2}
Let $m,n \in \mathbb{N}$ and let $m>n \geq 2$. Let $M_i$ be a closed and connected $m$-dimensional manifold admitting
 a round fold map $f_i:M_i \rightarrow {\mathbb{R}}^n$ {\rm (}$i=1,2${\rm )}. We also asssume that the fiber of a point in a proper core
 of $f_i$ {\rm (}$i=1,2${\rm )} has a connected component diffeomorphic to the {\rm (}$m-n${\rm )}-dimensional standard sphere $S^{m-n}$.  
 Then, on any manifold $M$ represented as a connected sum of the manifolds $M_1$ and $M_2$, there exists a round fold map $f:M \rightarrow {\mathbb{R}}^n$.  
\end{Thm}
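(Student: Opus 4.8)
The plan is to reduce the statement to Proposition~\ref{prop:1} by the canonical combining operation. That proposition takes a map $f_1$ whose proper-core fiber has an $S^{m-n}$-component together with a map $f_2$ whose \emph{outermost} singular sphere is trivially embedded, and builds a round fold map on an arbitrary connected sum. In the present situation I am instead handed the $S^{m-n}$-condition on the proper core of \emph{both} $f_1$ and $f_2$, and nothing about outermost singular spheres. So the essential point is to replace $f_2$ by a round fold map $\hat{f}_2:M_2\to\mathbb{R}^n$, on the \emph{same} manifold $M_2$, whose outermost singular sphere is connected and trivially embedded. Once $\hat{f}_2$ is available, applying the canonical combining operation to the pair $(f_1,\hat{f}_2)$ produces a round fold map on any connected sum $M$ of $M_1$ and $M_2$, which is exactly the conclusion.

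To manufacture $\hat{f}_2$ I would work with the surrounding bundle of $f_2$. Writing $E$ for the proper-core preimage and $E'$ for its complement, one has $M_2=E\cup E'$ with $E\cong F\times D^n$ trivial over the proper core (here $F$ is the proper-core fiber) and $E'$ the $f_2^{-1}(L)$-bundle over $S^{n-1}$ whose fiber $\bar{M}:=f_2^{-1}(L)$ is a compact cobordism with $\partial\bar{M}=F$. By hypothesis $F$ has a component $\Sigma\cong S^{m-n}$, and by the structure of a round fold map over its proper core the corresponding tube is a \emph{trivial} $S^{m-n}\times D^n$; in particular the surrounding bundle is trivial over $\Sigma$. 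The idea is then to alter $f_2$ only through the fiberwise Morse function $f_2\mid_{\bar{M}}$, keeping both the bundle $E'$ and the piece $E$ intact so that the source stays diffeomorphic to $M_2$, re-foliating the trivial tube so that $\Sigma$ is no longer carried outward as a regular fiber but is instead capped off by a single maximum through a product collar $S^{m-n}\times[0,1]$. This maximum becomes the outermost fold of $\hat{f}_2$, and since the collar and the cap lie in the region where the surrounding bundle is trivial, the resulting outermost singular sphere is a standard $S^{n-1}$ carrying a product $D^{m-n+1}$-collar, hence trivially embedded in $M_2$. This is precisely the hypothesis that Proposition~\ref{prop:1} demands of its second map.

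The main obstacle is exactly this re-foliation step, and it splits into two issues. First, the modified fiberwise function must remain compatible with the structure group of the surrounding bundle, so that spinning it over $S^{n-1}$ still closes up to a genuine round fold map with the same source; this is why I would confine all the modification to the $\Sigma$-summand, where the bundle is trivial and therefore imposes no constraint. Second, I must verify that the new outermost sphere is genuinely trivially embedded, which comes down to performing the capping inside the trivial part of the bundle---and here the hypothesis that $F$ contains a full $S^{m-n}$-\emph{component} (so that a product collar of $S^{m-n}$ is actually available) is used in an essential way. With the trivially embedded outermost singular sphere of $\hat{f}_2$ in hand, the conclusion follows from Proposition~\ref{prop:1}: its canonical combining operation glues $f_1$ off a proper-core $S^{m-n}$-tube to $\hat{f}_2$ off its now-standard outermost collar, and the chain of diffeomorphisms in that proof identifies the result with an arbitrary connected sum of $M_1$ and $M_2$.
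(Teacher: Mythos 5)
Your overall strategy is the same as the paper's: reduce Theorem \ref{thm:2} to Proposition \ref{prop:1} by first replacing one of the two given maps by a round fold map on the \emph{same} manifold whose outermost singular sphere is a trivially embedded definite fold, then apply the canonical combining operation. The gap is in the replacement step. Capping off the component $\Sigma\cong S^{m-n}$ of the proper-core fiber ``by a single maximum through a product collar'' means replacing the proper-core tube $V\cong S^{m-n}\times D^{n}$ by the total space of a trivial $D^{m-n+1}$-bundle over the new fold sphere, i.e.\ by $D^{m-n+1}\times S^{n-1}$. These two pieces share the boundary $S^{m-n}\times S^{n-1}$ but are not diffeomorphic in general (compare $S^{3}\times D^{2}$ with $D^{4}\times S^{1}$ for $(m,n)=(5,2)$), so your operation is a surgery on $M_2$, not a re-foliation: the source of $\hat f_2$ is no longer $M_2$. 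If instead you insist on keeping $V=S^{m-n}\times D^{n}$ as a subset and only re-mapping it so that its image becomes the outermost annulus, then $V$ becomes a union of fibers of the surrounding bundle of $\hat f_2$ and must itself fiber over $S^{n-1}$; for $(m,n)=(5,2)$ this would require the simply connected $S^{3}\times D^{2}$ to fiber over $S^{1}$, which is impossible. Either way the claimed $\hat f_2$ does not exist in general, and the further assertion that the new maximum ``becomes the outermost fold'' is unsupported: a cap attached at the proper core produces an innermost fold unless one also justifies sliding it radially past all the other singular values.

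The paper's construction is designed precisely to dodge this. It spins a Morse function on $D^{m-n+1}$ with \emph{two} local maxima to get an auxiliary round fold map $f_0:S^{m}\to\mathbb{R}^{n}$; removing the outermost definite-fold collar $V_2\cong D^{m-n+1}\times S^{n-1}$ from $S^{m}=\partial(D^{m-n+1}\times D^{n})$ leaves exactly $S^{m-n}\times D^{n}$, so the combining operation applied to $(f_1,f_0)$ swaps a proper-core tube of $f_1$ for a diffeomorphic piece, leaving $M_1\# S^{m}\cong M_1$ unchanged while importing a genuine spare definite-fold tube $D^{m-n+1}\times S^{n-1}$ (the second maximum) that already sits \emph{inside} that $S^{m-n}\times D^{n}$. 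That fold is then slid outward to become the outermost one of a map $f_1'$, its preimage being trivially embedded because it lies in the $S^{m}$-summand, and Proposition \ref{prop:1} applied to $(f_2,f_1')$ finishes the proof. To repair your argument, replace the capping step by this connected sum with $f_0$ (performed on $f_2$ rather than $f_1$ if you prefer your ordering): the single maximum must come paired with a second one so that the complement of its collar in $S^{m}$ is exactly the $S^{m-n}\times D^{n}$ you are excising.
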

\begin{proof}
By a trivial spinning construction, we can construct a $C^{\infty}$ trivial round fold map as in the following.
There exists a Morse function $\tilde{f}:D^{m-n+1} \rightarrow [a,+\infty)$ satisfying the following three. 
\begin{enumerate}
\item $a$ is the minimum of $\tilde{f}$ and ${\tilde{f}}^{-1}(a)=\partial D^{m-n+1}$ holds.
\item $\tilde{f}$ has at least two singular points of index $0$ at which the values of the functions are local maxima.
\item All the singular points of $\tilde{f}:\partial D^{m-n+1} \rightarrow [a,+\infty)$ are in the interior ${\rm Int} D^{m-n+1}$ of the disc $D^{m-n+1}$ and at distinct singular points, the values
 are always distinct.
\end{enumerate}

Let $p:\partial D^{m-n+1} \times \partial D^n \rightarrow \partial D^n$ be
 the canonical projection. The following diagram commutes. 

$$
\begin{CD}
\partial D^{m-n+1} \times \partial ({\mathbb{R}}^n-{\rm Int} D^n)  @> {\phi} \times {\rm id}_{\partial ({\mathbb{R}}^n-{\rm Int} D^n)} >> \partial D^{m-n+1} \times \partial D^n \\
@VV {\tilde{f}} {\mid}_{\partial D^{m-n+1}} \times {\rm id}_{\partial ({\mathbb{R}}^n-{\rm Int} D^n)} V @VV p V \\
\{a\} \times \partial ({\mathbb{R}}^n-{\rm Int} D^n) @> \phi >> \partial D^n
\end{CD}
$$

By using the diffeomorphisms $\phi \times {\rm id}_{\partial ({\mathbb{R}}^n-{\rm Int} D^n)}$ and $\phi$, we
 obtain a round fold map $f_0:S^{m} \rightarrow {\mathbb{R}}^n$ such that the embedding of any connected component $C$ of the singular set $S(f_0)$ into
 $S^m$ is a trivial embedding.

\ \ \ By Proposition \ref{prop:1}, by a canonical combining operation to the pair $(f_1,f_0)$ of the maps, we can construct a new
 round fold map from $M_1$ into ${\mathbb{R}}^n$. By deforming the obtained round fold map without changing its singular sets, we obtain a round fold map ${f_1}^{\prime}:M_1 \rightarrow {\mathbb{R}}^n$ such
 that for the connected component $C^{\prime}$ of the boundary $\partial {f_1}^{\prime}(M_1)$ of ${f_1}^{\prime}(M_1)$ bounding the unbounded connected
 component of ${\mathbb{R}}^n-{\rm Int} {f_1}^{\prime}(M_1)$, ${{f_1}^{\prime}}^{-1}(C^{\prime})$
 is originally a connected component of $S(f_0) \subset S^m$ consisting of definite fold
 points. The embedding of ${{f_1}^{\prime}}^{-1}(C^{\prime})$ into $M_1$ is a trivial embedding. We may apply
 Proposition \ref{prop:1} to the pair $(f_2,{f_1}^{\prime})$ of the maps to complete the proof. 
\end{proof}

\begin{Cor}
\label{cor:3}
Let $m \geq 3$. Let $M_i$ be a closed and connected $m$-dimensional manifold admitting
 a round fold map $f_i:M_i \rightarrow {\mathbb{R}}^{m-1}$ such that $f_i(M_i)$ is diffeomorphic to $D^{m-1}$ {\rm (}$i=1,2${\rm )}. Then, on any manifold $M$ represented as a connected
 sum of the manifolds $M_1$ and $M_2$, there exists a round fold map $f:M \rightarrow {\mathbb{R}}^{m-1}$.  
\end{Cor}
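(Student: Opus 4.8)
The plan is to reduce the statement to Theorem \ref{thm:2} by taking $n := m-1$ and verifying its hypotheses. With this choice the assumption $m \geq 3$ is precisely the condition $m > n \geq 2$ demanded by Theorem \ref{thm:2}, so it remains only to check the hypothesis that the fiber of a point in a proper core of $f_i$ has a connected component diffeomorphic to $S^{m-n}$ for each $i=1,2$.

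The key observation is that here $m-n=1$, so $S^{m-n}=S^1$ and every regular fiber of $f_i$ is a closed $1$-manifold, that is, a (possibly empty) disjoint union of copies of $S^1$. Hence it is enough to show that the fiber over a point of a proper core ${D^n}_{\frac{1}{2}}$ of $f_i$ is non-empty; any such fiber then automatically has a connected component diffeomorphic to $S^{m-n}$. I would deduce non-emptiness from the hypothesis $f_i(M_i) \cong D^{m-1}$ as follows. The image of a round fold map is a union of the concentric shells cut out by the singular value spheres, and the proper core lies strictly inside the innermost singular value sphere. Were the fiber over a proper core point empty, the central region would be absent from the image, so $f_i(M_i)$ would have a hole and could not be diffeomorphic to the solid disc $D^{m-1}$. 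Since $f_i(M_i) \cong D^{m-1}$, the proper core lies in the interior of the image and the fiber over a point there is non-empty.

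With the fiber hypothesis verified for both $i=1$ and $i=2$, I would apply Theorem \ref{thm:2} directly to the pair $(f_1,f_2)$ to produce a round fold map $f:M \rightarrow {\mathbb{R}}^{m-1}$ on any manifold $M$ represented as a connected sum of $M_1$ and $M_2$. I expect the main obstacle to be the middle step, namely converting the global hypothesis $f_i(M_i) \cong D^{m-1}$ into the local statement that the proper core fiber is non-empty; once this is in place, the dimension identity $m-n=1$ together with the citation of Theorem \ref{thm:2} finishes the argument with no further computation.
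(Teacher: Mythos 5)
Your proposal is correct and follows essentially the same route as the paper: the paper's proof simply observes that since $m-n=1$ the regular fibers of $f_1$ and $f_2$ are disjoint unions of circles and then invokes Theorem \ref{thm:2}. Your additional care in using the hypothesis $f_i(M_i)\cong D^{m-1}$ to guarantee that the proper core fiber is non-empty makes explicit a point the paper leaves implicit, but it is the same argument.
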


\begin{proof}
Regular fibers of the maps $f_1$ and $f_2$ are always disjoint unions of circles. Thus, the statement follows from Theorem \ref{thm:2} immediately.
\end{proof}

We introduce another result.  

\begin{Thm}
\label{thm:3}
Let $m$ and $n$ be integers larger than $1$ and let $m-n \geq 1$. Then, any $m$-dimensional manifold $M$ represented
 as a connected sum of two closed and connected manifolds $M_1$ and $M_2$ satisfying the following conditions admits a round fold map $f$ into ${\mathbb{R}}^n$.
\begin{enumerate}
\item $M_1$ admits a round fold map $f_1$ whose image is diffeomorphic to $D^n$ and the fiber of a point in a proper core of which has a connected component
 diffeomorphic to $S^{m-n}$.
\item For an {\rm (}$m-n${\rm )}-dimensional closed and connected manifold $F \neq \emptyset$, $M_2$ has the structure of an $F$-bundle over $S^n$. 
\end{enumerate}

Moreprecisely, we obtain the map $f$ by a canonical combining operation to the pair $(f_1,f_2)$ of two maps where $f_2:M_2 \rightarrow {\mathbb{R}}^n$ is
 a map obtained in Theorem \ref{thm:4} by considering the bundle structure of $M_2$.
\end{Thm}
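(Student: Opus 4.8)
The plan is to obtain $f$ as the output of the canonical combining operation of Proposition~\ref{prop:1} applied to the pair $(f_1,f_2)$, exactly as the statement indicates. Thus the proof reduces to two tasks: producing a round fold map $f_2:M_2 \to \mathbb{R}^n$ from the bundle structure of $M_2$, and verifying that $(f_1,f_2)$ satisfies the two hypotheses of Proposition~\ref{prop:1}; once both hypotheses hold, Proposition~\ref{prop:1} delivers a round fold map on a connected sum of $M_1$ and $M_2$, and since the combining operation realizes every connected sum, $f$ exists on any such $M$.

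First I would check the hypothesis concerning the inner map, which is essentially free. By assumption~(1) the map $f_1$ is a round fold map a point of whose proper core has fiber containing a connected component diffeomorphic to $S^{m-n}$; this is precisely the first hypothesis of Proposition~\ref{prop:1}, so $f_1$ may be used unchanged. The extra assumption that $f_1(M_1) \cong D^n$ is what guarantees that, outside its proper core, $f_1$ carries a single concentric family of singular spheres onto which the combining operation grafts the structure of $f_2$, so that the result is again a round fold map with connected-sum source.

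Next I would build $f_2$, which is the role of Theorem~\ref{thm:4}. Decomposing $S^n = D^n_+ \cup_{S^{n-1}} D^n_-$ and trivializing the $F$-bundle over each hemisphere gives $M_2 \cong (F \times D^n_+) \cup_{\psi} (F \times D^n_-)$ for a clutching map $\psi$ along the equator; spinning a fibrewise Morse function over the hemispheres, in the spirit of the trivial spinning construction, yields a round fold map $f_2$ whose singular value set is a family of concentric spheres and whose outermost component $C \subset \partial f_2(M_2)$ consists of definite (index $0$) fold points. Consequently $f_2^{-1}(C)$ is a single copy of the singular sphere $S^{n-1}$, the regular fibre being empty just outside $C$.

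The step I expect to be the main obstacle is the second hypothesis of Proposition~\ref{prop:1}: that $f_2^{-1}(C) \cong S^{n-1}$ is a trivial embedding in $M_2$. Note that here there is no dimension hypothesis beyond $m-n \geq 1$, so the general-position / Haefliger argument used in Corollary~\ref{cor:1} is unavailable and the triviality must instead be read directly off the construction. The observation I would exploit is that Theorem~\ref{thm:4} places this outermost fold locus inside one trivializing chart $F \times D^n_+ \subset M_2$, where it appears as $\{{\rm pt}\} \times \partial D^n_0$ for a small disc $D^n_0 \subset {\rm Int}\, D^n_+$. Choosing a coordinate ball $D^{m-n} \subset F$ about ${\rm pt}$, the product $D^{m-n} \times D^n_+$ is an $m$-dimensional disc smoothly embedded in $M_2$, inside which $f_2^{-1}(C) = \{{\rm pt}\} \times \partial D^n_0$ is the standard unknotted $(n-1)$-sphere bounding the flat slice $\{{\rm pt}\} \times D^n_0$; this is exactly a trivial embedding in the sense fixed at the start of the section. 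With both hypotheses verified, I would conclude by invoking Proposition~\ref{prop:1} for the pair $(f_1,f_2)$.
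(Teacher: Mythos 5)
Your proposal follows the paper's route exactly: the paper's proof of Theorem \ref{thm:3} is a one-line application of Proposition \ref{prop:1} to the pair $(f_1,f_2)$ with $f_2$ supplied by Theorem \ref{thm:4}, and your verification of the two hypotheses --- in particular the triviality of the embedding of ${f_2}^{-1}(C)$ obtained by identifying the outermost definite fold sphere with a section over a small sphere inside a trivializing chart and unknotting it in a coordinate $m$-disc --- reproduces the content of the paper's proof of Theorem \ref{thm:4}, especially its fourth condition. The only looseness is that in the construction this fold sphere actually lies over the equatorial region $S^{n-1}\times[0,1]$ rather than literally inside a hemisphere chart, so one must first isotope it into the chart after choosing the clutching isomorphism $\Phi$ so that the relevant constant section is preserved; the paper disposes of this with the same brief remark, so this is not a gap.
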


To prove Theorem \ref{thm:3}, we need the following new result. 

\begin{Thm}
\label{thm:4}
Let $F \neq \emptyset$ be a closed and connected manifold. Let $M$ be a closed manifold of dimension $m$ having the structure of an $F$-bundle
 over $S^n$ {\rm (}$m \geq n \geq 2${\rm )}. Then, $M$ admits a round fold map $f:M \rightarrow {\mathbb{R}}^n$ satisfying the following four conditions.
\begin{enumerate}
\item $f$ is $C^{\infty}$ trivial.
\item For an axis $L$ of $f$, $f^{-1}(L)$ is diffeomorphic to $F \times [-1,1]$.
\item Two connected components of the fiber of a point in a proper core of $f$ is regarded as fibers of the $F$-bundle
 over $S^n$.
\item $f(M)$ is diffeomorphic to $D^n$ and for the connected component $C:=\partial f(M)$, the embedding
 of $f^{-1}(C)$ into $M$ is a trivial embedding. More precisely, the statement in the following paragraph holds.

\ \ \ Let $P$ be a proper core of $f$. Then, as a continuous
 map into the space $f^{-1}({\mathbb{R}}^n-{\rm Int} P)$, this embedding is smoothly isotopic
 to a section of the bundle over $\partial P$ obtained by the restriction
 of the bundle given by the surjection $f {\mid}_{f^{-1}(P)}:f^{-1}(P) \rightarrow P$ extending to a section
 of the bundle $f {\mid}_{f^{-1}(P)}$.
\end{enumerate}
\end{Thm}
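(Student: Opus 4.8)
The plan is to realize $f$ by a \emph{trivial spinning construction} applied to the compact manifold $\bar{M}:=F \times [-1,1]$, using the bundle structure of $M$ only to prescribe the gluing data. First I would record a convenient decomposition. Writing $S^n=D^n_{+} \cup_{S^{n-1}} (S^{n-1} \times [-1,1]) \cup_{S^{n-1}} D^n_{-}$, the equatorial sphere $S^{n-1}$ bounds a hemisphere, so the restriction of $M$ to this collar of the equator is trivial, namely $F \times (S^{n-1} \times [-1,1])$, and $M$ is also trivial over each hemisphere. Hence $M$ is obtained from two trivial caps $F \times D^n$ and the trivial tube $(F \times [-1,1]) \times S^{n-1}$ by gluing along copies of $F \times S^{n-1}$ by bundle isomorphisms covering ${\rm id}_{S^{n-1}}$, one of which may be taken to carry the clutching of $M$ and the other the identity.

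Second I would produce the fiber function. Choosing a Morse function $g_0:F \rightarrow \mathbb{R}$ with a unique point of maximum, isolated critical points and pairwise distinct critical values, I set $\tilde{f}(x,t):=a+(1-t^2)(1+\epsilon g_0(x))$ for small $\epsilon>0$. Then $\tilde{f}:\bar{M} \rightarrow [a,+\infty)$ attains its minimum $a$ exactly on $\partial \bar{M}=F \times \{-1,1\}$; its critical points are the isolated nondegenerate points of $\{\,x : dg_0(x)=0\,\} \times \{0\}$ with pairwise distinct values, and the unique point of maximum of $g_0$ gives the unique top critical point (of index $(m-n)+1$). This is precisely the input required by the trivial spinning construction for manifolds with boundary.

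Third I would run the spinning construction, but with the gluing diffeomorphism $\Phi$ twisted by the clutching of $M$. The essential point, and the step I expect to be the main obstacle, is that this twisting is compatible with the maps being glued: the cap map is the projection $F \times D^n \rightarrow D^n$, which forgets $F$ entirely, and on the gluing locus $\partial \bar{M} \times S^{n-1}$ the function $\tilde{f}$ is the constant $a$; hence any fiberwise diffeomorphism of $F$ over $S^{n-1}$ preserves both maps and makes the required square commute. Thus the spun map $f:M \rightarrow \mathbb{R}^n$ is a well-defined round fold map with $f(M)$ diffeomorphic to $D^n$. By choosing $\Phi$ to realize the clutching class of the given bundle one obtains the prescribed source manifold; that every $F$-bundle over $S^n$ is attained follows since the clutching over the equator is the only invariant and it is produced by $\Phi$.

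Finally I would read off the four conclusions from this explicit model. The surrounding bundle is the product tube $(F \times [-1,1]) \times S^{n-1}$, which gives $C^{\infty}$ triviality (1); for a ray $L$ the preimage $f^{-1}(L)$, assembled from $\bar{M}$ and the two radial cap segments, is diffeomorphic to $F \times [-1,1]$, giving (2); the fiber over a proper-core point is $\partial \bar{M}=F \sqcup F$, whose two components are the fibers of $M$ over the two hemisphere centers, giving (3); and $f^{-1}(\partial f(M))=\{x_{\max}\} \times \{0\} \times S^{n-1}$ sits in the product chart as $\{x_{\max}\} \times S^{n-1}$, so it is a trivial embedding which, inside $f^{-1}(\mathbb{R}^n-{\rm Int} P)$, is isotopic to the section $\theta \mapsto (x_{\max},0,\theta)$ extending over the cap, giving (4) together with its more precise refinement.
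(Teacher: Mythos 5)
Your proposal is correct and follows essentially the same route as the paper's proof: decompose $M$ over $S^n$ as two trivial caps plus a trivial equatorial tube, apply a trivial spinning construction to a Morse function on $F\times[-1,1]$ with the gluing diffeomorphism $\Phi$ carrying the clutching of the bundle, and read off the four conclusions from the explicit model. Your only additions are cosmetic --- an explicit formula for the fiberwise Morse function where the paper merely invokes its existence, and a spelled-out reason why the twisted gluing is compatible with the maps being glued.
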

% In the following proof of this theorem, We denote the disjoint union of two spaces $X_1$ and $X_2$ by $X_1 \sqcup X_2$.
\begin{proof}
% \ \ \ Since $f$ is an ASFMR, topologically trivial and the rank of $f$ is $2$, $f^{-1}({{\mathbb{R}}}^n-{\rm Int} P)$ is a $C^{\infty}$ manifold homeomorphic to
% $S^{n-1} \times \Sigma \times I$ for an almost-sphere $\Sigma$ and the boundary of the manifold is
% a disjoint union of two copies of $S^{n-1} \times \Sigma$. Since $m \geq 6$, $f^{-1}({{\mathbb{R}}}^n-{\rm Int} P)$
% is diffeomorphic to $S^{n-1} \times \Sigma \times I$ by applying s-cobordism theorem. This completes the first part of the theorem. \\
\ \ \ We construct a map satisfying the assumption on a $F$-bundle $M$ over $S^n$. We may represent $S^n$ as $(D^n \sqcup D^n) \bigcup (S^{n-1} \times [0,1])$, where
 we identify as $\partial (D^n \sqcup D^n)=S^{n-1} \sqcup S^{n-1}$. For a
 diffeomorphism $\Phi$ from $(S^{n-1} \sqcup S^{n-1}) \times F$ onto $(\partial D^n \sqcup \partial D^n) \times F$ which is a bundle isomorphism between
 the trivial $F$-bundles inducing the identification of the base spaces, we may
 represent $M$ as $((D^n \sqcup D^n) \times F) {\bigcup}_{\Phi} (S^{n-1} \times [0,1] \times F)=(D^n \times (F \sqcup F)) {\bigcup}_{\Phi} (S^{n-1} \times [0,1] \times F)$ since the base space of the bundle $M$ is a standard sphere. \\
\ \ \ There exists a Morse function $\tilde{f}:F \times [0,1] \rightarrow [a,+\infty)$, where $a \in \mathbb{R}$ is the minimum, as in the presentation of a trivial spinning construction before. We use a trivial spinning construction as the following. \\
\ \ \ We consider a map $\tilde{f} \times {\rm id}_{S^{n-1}}$ and the canonical projection $p:D^n \times (F \sqcup F) \rightarrow D^n$. For the maps $\Phi$, $\tilde{f} \times {\rm id}_{S^{n-1}}$ and $p$
 and a diffeomorphism $\phi:\partial ({\mathbb{R}}^n-{\rm Int} D^n) \rightarrow \partial D^n$, we may
 assume that the following diagram commutes.

$$
\begin{CD}
F \times (\{0\} \sqcup \{1\}) \times \partial ({\mathbb{R}}^n-{\rm Int} D^n) @> \Phi >> F \times (\{0\} \sqcup \{1\}) \times \partial D^n \\
@VV {\tilde{f}} {\mid}_{F \times (\{0\} \sqcup \{1\})} \times {\rm id}_{\partial ({\mathbb{R}}^n-{\rm Int} D^n)} V @VV p {\mid}_{F \times (\{0\} \sqcup \{1\}) \times \partial D^n} V \\
\{a\} \times \partial ({\mathbb{R}}^n-{\rm Int} D^n) @> \phi >> \partial D^n  
\end{CD}
$$  

Then, by gluing the maps $p$ and $\tilde{f} \times {\rm id}_{S^{n-1}}$ by the pair of diffeomorphisms $(\Phi,\phi)$, we obtain a $C^{\infty}$ trivial
 round fold map $f:M \rightarrow {\mathbb{R}}^n$. \\
\ \ \ We may assume that the embedding of $f^{-1}(\partial f(M))$ into $M$ is smoothly isotopic to an embedding $\partial D^n \times \{p\} \subset D^n \times (F \sqcup F) \subset M$. In fact, by considering the bundle structure over $S^n=(D^n \sqcup D^n) \bigcup (S^{n-1} \times [0,1])$ of $M$, we can
 easily take $\Phi$ so that this holds. This
 means that the embedding of $f^{-1}(\partial f(M))$ into $M$ is a trivial embedding into $M_2$.

\ \ \ We see that $f$ is a round fold map satisfying the given conditions. This completes the proof.
\end{proof}

\begin{Rem}
Theorem \ref{thm:4} was also shown in \cite{kitazawa4} by the author for a similar map $f$ without the last condition. Furthermore, the author has shown
 that a manifold admitting such a map has the structure of an $F$-bundle over $S^n$.
\end{Rem}

\begin{Ex}
\label{ex:2}
In the situation of the proof of Theorem \ref{thm:4}, let $F:=S^{m-n}$ ($m>n \geq 2$) and let $\tilde{f}:F \times [-1,1] \rightarrow [a,+\infty)$ be a Morse
 function with two singular points, where $a \in \mathbb{R}$ is the minimum, as in the presentation of a trivial spinning construction before (we easilly obtain such a Morse function). As a result, on any
 manifold having the structure of an $S^{m-n}$-bundle
 over $S^n$, we have a round fold map as in Theorem \ref{thm:4} whose singular set consists of two connected components and the fiber of
 a proper core of which is a disjoint union of two copies of $S^{m-n}$. The author constructed
 such a map first in \cite{kitazawa} without assuming the last condition mentioned in Theorem \ref{thm:4}. Furthermore, the author
 has shown that a manifold admitting such a map has the structure of an $S^{m-n}$-bundle over $S^n$. 
\end{Ex}

\begin{proof}[Proof of Theorem \ref{thm:3}]
 By applying Proposition \ref{prop:1} to the pair of a round fold map from $M_1$ into ${\mathbb{R}}^n$ appearing in the assumption
 and a round fold map from $M_2$ into ${\mathbb{R}}^n$ constructed in Theorem \ref{thm:4}, we obtain a round
 fold map on the manifold $M$. This completes the proof.
\end{proof}

We have the following corollary to Theorem \ref{thm:3}.

\begin{Cor}
\label{cor:4}
Let $m$ and $n$ be integers larger than $1$ and let $m-n \geq 1$. Then, any $m$-dimensional manifold $M$ represented
 as a connected sum of two closed and connected manifolds $M_1$ and $M_2$ satisfying the following conditions admits a round fold map
 $f$ into ${\mathbb{R}}^n$ such that the fiber of a point in a proper core of which consist of three connected components.
\begin{enumerate}
\item $M_1$ has the structure of an $S^{m-n}$-bundle over $S^n$.
\item For an {\rm (}$m-n${\rm )}-dimensional closed and connected manifold $F_1 \neq \emptyset$, $M_2$ has the structure of an $F_1$-bundle over $S^n$. 
\end{enumerate}

More precisely, we obtain the map $f$ by a canonical combining operation to the pair of two maps obtained in Theorem \ref{thm:4} by considering the bundle structures.

Let $F_2 \neq \emptyset$ be an {\rm (}$m-n${\rm )}-dimensional closed and connected manifold. Then, on any manifold represented as a connected sum
 of the manifold $M$ and any manifold having the structure of an $F_2$-bundle over $S^n$, we obtain a round fold map
 into ${\mathbb{R}}^n$ by a canonical combining operation to the pair of maps $f$ and a map obtained in Theorem \ref{thm:4} by considering
 the bundle structure of the latter manifold. 
\end{Cor}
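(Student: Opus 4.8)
The plan is to reduce everything to Theorem \ref{thm:4} and Proposition \ref{prop:1} (equivalently, to the canonical combining operation), since the statement explicitly says "we obtain the map $f$ by a canonical combining operation." First I would verify that the hypotheses of Proposition \ref{prop:1} are met for the pair whose connected sum is $M$. Both $M_1$ and $M_2$ carry $F$-bundle structures over $S^n$ (with fibers $S^{m-n}$ and $F_1$ respectively), so Theorem \ref{thm:4} applies to each and furnishes round fold maps $g_1,g_2:M_i \to {\mathbb{R}}^n$ that are $C^{\infty}$ trivial, whose images are diffeomorphic to $D^n$, and such that the relevant boundary singular set $g_i^{-1}(C)$ is a trivial embedding. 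This last property is exactly condition (2) of Proposition \ref{prop:1}, and condition (1) — that the fiber over a proper core contains an $S^{m-n}$ summand — holds because by Theorem \ref{thm:4}(3) two connected components of that fiber are fibers of the bundle, which for $M_1$ are copies of $S^{m-n}$.

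**Carrying out the combining operation.**

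Next I would apply the canonical combining operation to the pair $(g_1, g_2)$. Here I must be slightly careful about which map plays the role of $f_1$ and which plays $f_2$ in Proposition \ref{prop:1}: the map supplying the proper-core $S^{m-n}$ fiber (condition 1) should be $M_1$'s map, since $M_1$ is the $S^{m-n}$-bundle, while the trivial-embedding condition (condition 2) must hold for the other map — but Theorem \ref{thm:4}(4) guarantees triviality of the boundary embedding for \emph{both} maps, so either assignment works. The output is a round fold map $f$ on a connected sum of $M_1$ and $M_2$. To see that the proper-core fiber of $f$ has exactly three connected components, I would trace the construction in the proof of Proposition \ref{prop:1}: the fiber over the proper core of $f$ inherits the two components from one map's proper core fiber, with one component replaced/glued along the trivially embedded sphere coming from the second map, which contributes its two components minus the one consumed in gluing — giving $2 + 2 - 1 = 3$. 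This bookkeeping of components is the one place where I would check the combining operation carefully rather than invoke it as a black box.

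**The iterated statement and the main obstacle.**

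For the final paragraph of the corollary, I would observe that the map $f$ just produced still satisfies the hypotheses needed to feed it back into Proposition \ref{prop:1} against a fresh map from Theorem \ref{thm:4} built on an $F_2$-bundle over $S^n$. Concretely, $f$ has image diffeomorphic to $D^n$ with an outer singular component whose preimage is trivially embedded (inherited from the combining operation, which preserves condition (2)), and its proper-core fiber contains an $S^{m-n}$ component, so condition (1) of Proposition \ref{prop:1} persists. Applying the canonical combining operation to the pair $(f, g_3)$, where $g_3$ comes from Theorem \ref{thm:4} on the $F_2$-bundle, yields the asserted round fold map on the iterated connected sum. The main obstacle I anticipate is not any deep argument but rather the careful verification that the combining operation genuinely \emph{preserves} the trivial-embedding condition and the presence of an $S^{m-n}$ proper-core component, so that the construction can be iterated; this amounts to re-examining the $C^{\infty}$ equivalences (1) and (2) in the conclusion of Proposition \ref{prop:1} and confirming that the outer part of the glued map $f$ agrees with the outer part of one of the input maps, which is precisely what conclusion (1) of that proposition provides.
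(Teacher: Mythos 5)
Your proposal is correct and follows essentially the same route as the paper, which states Corollary \ref{cor:4} as an immediate consequence of Theorem \ref{thm:3} (itself Theorem \ref{thm:4} fed into the canonical combining operation of Proposition \ref{prop:1}), with the three proper-core fiber components arising exactly as you count them. One small caveat: your remark that ``either assignment works'' holds only for condition (2); condition (1) of Proposition \ref{prop:1} forces the $S^{m-n}$-bundle's map to play the role of $f_1$, which is the assignment you in fact use.
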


We note again that $S^3$, $S^7$ and $S^{15}$ have the structures of a linear $S^1$-bundle over $S^2$, a linear $S^3$-bundle over $S^4$ and a
 linear $S^7$-bundle over $S^8$, respectively. By applying Theorem \ref{thm:3} inductively or Corollary \ref{cor:4}, we have the following theorem.

\begin{Thm}
\label{thm:5}
Let $m$ and $n$ be integers larger than $1$ and let $m-n \geq 1$.
\begin{enumerate}
\item
\label{thm:5.1}
 Any manifold represented as a connected sum of $l \in \mathbb{N}$ closed manifolds
 having the structures of $S^{m-n}$-bundles over $S^n$ admits a round fold map $f$ into ${\mathbb{R}}^n$ satisfying the following four.
\begin{enumerate}
\item All the regular fibers of $f$ are disjoint unions of finite copies of $S^{m-n}$.
\item The number of connected components of $S(f)$ and the number of connected components of the fiber of a point in a proper core of $f$ are $l$. 
\item All the connected components of the fiber of a point in a proper core of $f$ are regarded as fibers of the $S^{m-n}$-bundles
 over $S^n$ and a fiber of any $S^{m-n}$-bundle over $S^n$ appeared in the connected sum is regarded as a connected component of the
 fiber of a point in a proper core of $f$.
\item For any connected component $C$ of $f(S(f))$ and a small closed tubular neighborhood $N(C)$ of $C$ such that $\partial N(C)$ is the disjoint
 union of two connected components $C_1$ and $C_2$, $f^{-1}(N(C))$ has the structures
 of trivial bundles over $C_1$ and $C_2$ whose fiber is diffeomorphic to the {\rm (}$m-n+1${\rm )}-dimensional standard closed disc $D^{m-n+1}$ or
 a disjoint union of finite copies of the {\rm (}$m-n${\rm )}-dimensional standard sphere with the
 interior of the union of three disjoint {\rm (}$m-n+1${\rm )}-dimensional standard closed discs embedded smoothly removed and $f {\mid}_{f^{-1}(C_1)}:f^{-1}(C_1) \rightarrow C_1$ and $f {\mid}_{f^{-1}(C_2)}:f^{-1}(C_2) \rightarrow C_2$ give the structures
 of subbundles of the bundles $f^{-1}(N(C))$.
\end{enumerate}
\item
\label{thm:5.2}
 Let $(m,n)=(3,2), (7,4), (15.8)$. Then any $m$-dimensional manifold $M$ represented
 as a connected sum of two closed and connected manifolds having the structures of bundles over $S^n$ admits a round fold map
 $f$ into ${\mathbb{R}}^n$. More precisely, we obtain the map $f$ by applying Corollary \ref{cor:4} setting $M_1:=S^m$ in the situation of the corollary. 
\end{enumerate}
\end{Thm}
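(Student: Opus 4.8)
The plan is to prove both parts by feeding manifolds into the combining machinery of Proposition~\ref{prop:1}, organised as an induction on the number of summands for part~\ref{thm:5.1} and as a single application of Corollary~\ref{cor:4} for part~\ref{thm:5.2}.

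For part~\ref{thm:5.1} I would induct on $l$. The base case $l=1$ is exactly Example~\ref{ex:2}: a single $S^{m-n}$-bundle over $S^n$ carries a $C^{\infty}$ trivial round fold map whose image is diffeomorphic to $D^n$, whose regular fibers are disjoint unions of copies of $S^{m-n}$, and whose proper-core fiber is a union of fibers of the bundle. For the inductive step I assume the connected sum $M'$ of the first $l-1$ bundles already admits a round fold map $f'$ satisfying (a)--(d); in particular its image is $D^n$ and its proper-core fiber contains a component diffeomorphic to $S^{m-n}$. Taking the pair $(M_1,M_2)=(M',\text{$l$-th bundle})$, where the $l$-th bundle is an $S^{m-n}$-bundle (hence an $F$-bundle with $F=S^{m-n}$), I invoke Theorem~\ref{thm:3}: a canonical combining operation on the pair $(f',f_2)$, with $f_2$ the map of Theorem~\ref{thm:4} attached to the $l$-th bundle, yields a round fold map on a manifold diffeomorphic to the connected sum of all $l$ bundles.

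The work is then to check that the four properties propagate through the combining operation and, crucially, that the inductive hypotheses themselves survive so the induction can continue. Property~(a) is immediate since both $f'$ and $f_2$ have regular fibers that are unions of copies of $S^{m-n}$, and the operation only glues such fibers through the cobordisms appearing near the singular spheres. Property~(d) follows by inspecting the two local models that the construction produces: over the singular spheres coming from index~$0$ points one gets trivial $D^{m-n+1}$-bundles, while over the singular sphere created where the outer component of $f_2$ is grafted onto a proper-core component of $f'$ one gets the pair-of-pants-type cobordism described as the second model in (d). Properties~(b) and~(c)---that the numbers of components of $S(f)$ and of the proper-core fiber coincide and match the number of summands, with each proper-core component identified with the fiber of one bundle---are the delicate book-keeping: Theorem~\ref{thm:4} exhibits the proper-core components of each summand as bundle fibers, and one must track these labels, together with the exact component count, through every combining step. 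Here the main structural obligation is to verify that the outermost singular sphere of the new map is again an index~$0$ sphere (so that the new image is again $D^n$) and that its preimage is again trivially embedded; this is precisely what lets me reinsert the output as the new $M_1$, and it is the point I expect to require the most care.

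For part~\ref{thm:5.2} I would use that for $(m,n)=(3,2),(7,4),(15,8)$ the total space $S^m$ of the Hopf fibration is itself a linear $S^{m-n}$-bundle over $S^n$. Given $M=N_1\#N_2$ with $N_1$ an $F_1$-bundle and $N_2$ an $F_2$-bundle over $S^n$, I apply Corollary~\ref{cor:4} with $M_1:=S^m$ in its Hopf structure, $M_2:=N_1$, and then connect-sum with $N_2$ as in the last paragraph of that corollary; the output is a round fold map on $S^m\#N_1\#N_2$. Since $S^m\#X\cong X$ for every closed $m$-manifold $X$, this manifold is diffeomorphic to $N_1\#N_2=M$, giving the desired map. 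The dimension restriction enters only through the existence of the Hopf bundle realising $S^m$ as an $S^{m-n}$-bundle over $S^n$; no such sphere bundle exists in other dimensions, which is exactly why $M_1:=S^m$ can play the role demanded by Corollary~\ref{cor:4} precisely for these three pairs.
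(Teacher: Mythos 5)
Your overall strategy coincides with the paper's: the paper disposes of this theorem in a single sentence (``by applying Theorem \ref{thm:3} inductively or Corollary \ref{cor:4}''), and your induction on $l$ with base case Example \ref{ex:2} and inductive step via Theorem \ref{thm:3} is exactly that, as is your treatment of part (\ref{thm:5.2}) via Corollary \ref{cor:4} with $M_1:=S^m$ in its Hopf-bundle structure. The structural obligation you single out --- that after each combining step the new map again has image diffeomorphic to $D^n$, an outermost definite-fold sphere whose preimage is trivially embedded, and a proper-core fiber component diffeomorphic to $S^{m-n}$, so that the output can be fed back into Theorem \ref{thm:3} --- is indeed the point requiring care, and it is supplied by conclusion (1) of Proposition \ref{prop:1} together with the last condition of Theorem \ref{thm:4}.

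However, the one verification you defer as ``delicate book-keeping'' is precisely the one that does not come out as you assert. The map of Example \ref{ex:2} on a single $S^{m-n}$-bundle over $S^n$ has a singular set with \emph{two} connected components and a proper-core fiber with \emph{two} components; each canonical combining operation preserves every singular component of the ambient map, deletes exactly one singular component of the grafted map (the outermost one, absorbed into $V_2$), and replaces one $S^{m-n}$ proper-core component by the two proper-core components of the grafted map. Both counts therefore grow by exactly one per additional summand, so a connected sum of $l$ bundles yields $l+1$ components, not $l$; already your base case $l=1$ violates (b) as printed. This $l+1$ count is what Theorem \ref{thm:6} actually uses (one bundle corresponds to two singular components, a connected sum of two bundles to three), and the value $l$ is in fact unattainable for, say, $S^{m-n}\times S^n$ with $l=1$, since a round fold map with connected singular set and image $D^n$ is special generic and forces the source to be a homotopy sphere. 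So the discrepancy is an off-by-one in the printed statement rather than a defect of the construction, but you should carry out the count instead of asserting that it ``matches the number of summands'': as written, that assertion is the step of your argument that fails.
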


\begin{Thm}
\label{thm:6}
Every $7$-dimesional homotopy sphere admits a round fold map $f$ into ${\mathbb{R}}^4$ satisfying the four conditions mentioned in Theorem \ref{thm:5} and the following three hold.
\begin{enumerate}
\item A $7$-dimensional homotopy sphere $M$ admits such a round fold map into ${\mathbb{R}}^4$ such that the singular set consists of one connected
 component if and only if $M$ is diffeomorphic to the $7$-dimensional standard sphere $S^7$.  
\item A $7$-dimensional homotopy sphere $M$ admits such a round fold map into ${\mathbb{R}}^4$ such that the singular set consists of two connected
 components if and only if $M$ has the structure of an $S^3$-bundle over $S^4$. Just $16$ of all the $28$ classes of the $7$-dimensional oriented h-cobordism
 group include such homotopy spheres. 
\item Every $7$-dimensional homotopy sphere $M$ admits such a round fold map into ${\mathbb{R}}^4$ such that the singular set consists of three connected
 components.
\end{enumerate}
\end{Thm}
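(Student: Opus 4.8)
The plan is to prove Theorem \ref{thm:6} by combining the general construction machinery (Theorem \ref{thm:5} \ref{thm:5.1} together with Corollary \ref{cor:4}) with the classical classification of $7$-dimensional homotopy spheres, in particular the Eells--Kuiper invariant from \cite{eellskuiper} and the h-cobordism group structure of \cite{kervairemilnor} and \cite{milnor}. First I would recall that by Theorem \ref{thm:1} (via Corollary \ref{cor:2}) every $7$-dimensional homotopy sphere is a connected sum of homotopy spheres carrying linear $S^3$-bundle structures over $S^4$, so that applying Theorem \ref{thm:5} \ref{thm:5.1} produces round fold maps into $\mathbb{R}^4$ on \emph{every} such homotopy sphere with all four listed properties. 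This settles the main assertion that every $7$-dimensional homotopy sphere admits such a map; the three enumerated refinements are then about controlling the number of connected components $l$ of the singular set.

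For the one-component case I would invoke Example \ref{ex:1}: a round fold map into $\mathbb{R}^4$ with connected singular set on a homotopy sphere of dimension $7$ is necessarily special generic (its single singular component consists of index-$0$ points), and the results of \cite{saeki2} and \cite{saeki3} quoted there force the source to be diffeomorphic to $S^7$; conversely $S^7$ admits such a map. For the two-component case, the key observation is that by Theorem \ref{thm:4} and Example \ref{ex:2}, a two-component round fold map of the type in Theorem \ref{thm:5} \ref{thm:5.1} has source manifold carrying an $S^3$-bundle structure over $S^4$ (the fiber of a point in a proper core being two copies of $S^3$ realized as fibers of the bundle), and conversely any such bundle manifold admits the two-component map by Example \ref{ex:2}. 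The enumeration ``$16$ of $28$'' then comes from computing, via the Eells--Kuiper $\mu$-invariant, exactly which classes in the oriented h-cobordism group $\Theta_7 \cong \mathbb{Z}/28\mathbb{Z}$ are realized by total spaces of linear $S^3$-bundles over $S^4$: these bundles are classified by $\pi_3(SO(4)) \cong \mathbb{Z} \oplus \mathbb{Z}$, and a standard computation (as in \cite{milnor} and \cite{eellskuiper}) shows the resulting spheres realize precisely $16$ of the $28$ diffeomorphism classes.

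For the three-component case I would use Corollary \ref{cor:4}: since $S^7$ itself has a linear $S^3$-bundle structure over $S^4$, taking $M_1 := S^7$ and letting $M_2$ range over $S^3$-bundle total spaces, a canonical combining operation yields a three-component round fold map on any connected sum of two such bundle manifolds, whose source manifold ranges over connected sums realizing, by the additivity of the Eells--Kuiper invariant, a larger set of h-cobordism classes than the $16$ above. The point to verify is that the connected sums of pairs of $S^3$-bundle total spaces over $S^4$ exhaust all $28$ classes of $\Theta_7$; this follows because the $16$ realizable single-bundle values form a subset of $\mathbb{Z}/28\mathbb{Z}$ whose pairwise sums cover the whole group. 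I expect the main obstacle to be this last arithmetic-realizability step, i.e.\ precisely identifying the image of the Eells--Kuiper invariant on linear $S^3$-bundles over $S^4$ and checking that its $2$-fold sumset is all of $\mathbb{Z}/28\mathbb{Z}$; the topological constructions themselves are immediate from the cited theorems, but pinning down the $16$ values and verifying the covering property requires the explicit $\mu$-invariant formula from \cite{eellskuiper} and \cite{milnor}.
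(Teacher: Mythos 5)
Your overall route coincides with the paper's: the main assertion and part (3) are obtained from the connected-sum decomposition of the $7$-dimensional h-cobordism group together with the sumset argument on the sixteen Eells--Kuiper values, and part (1) is handled by Example \ref{ex:1}. The arithmetic step you flag as the main obstacle is exactly what the paper does: it records the sixteen values $0,1,3,6,7,8,10,13,14,15,17,20,21,22,24,27$ from \cite{eellskuiper} and observes that every element of $\mathbb{Z}/28\mathbb{Z}$ is a sum of two of them; so that part of your plan is sound and matches the source.

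The genuine gap is in the ``only if'' direction of part (2). You assert that ``by Theorem \ref{thm:4} and Example \ref{ex:2}, a two-component round fold map of the type in Theorem \ref{thm:5} (\ref{thm:5.1}) has source manifold carrying an $S^3$-bundle structure over $S^4$,'' but Theorem \ref{thm:4} and the construction of Example \ref{ex:2} only give the converse implication (existence of the map given the bundle structure); the converse result quoted at the end of Example \ref{ex:2} applies to the specific $C^{\infty}$ trivial, linearly modelled maps constructed there. An arbitrary two-component map satisfying the conditions of Theorem \ref{thm:5} a priori has a surrounding bundle whose structure group sits in a group of diffeomorphisms of $S^3$ (preserving a Morse function), not in $SO(4)$. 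The missing input is Hatcher's proof of the Smale conjecture (\cite{hatcher}): the inclusion of $SO(4)$ into the orientation preserving diffeomorphism group of $S^3$ is a homotopy equivalence, which is what lets the paper identify such a map with a $C^{\infty}$ trivial one of the Example \ref{ex:2} type and only then conclude the bundle structure and the count of $16$ classes. Without this reduction the ``only if'' half of (2) is unjustified. A minor additional slip: in part (3) you propose taking $M_1:=S^7$ in Corollary \ref{cor:4}, which yields a connected sum of $S^7$ with a single bundle total space; what is actually needed, and what you then correctly use, is a connected sum of two bundle total spaces, each taken from the sixteen realizable classes.
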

\begin{proof}
The former part follows from the mentioned fact that every $7$-dimesional homotopy sphere is represented as a connected sum of $7$-dimesional
 homotopy spheres admitting the structures of $S^3$-bundles over $S^4$ and Theorem \ref{thm:5}. The proof of Theorem \ref{thm:1} also
 shows this fact.

\ \ \ We prove the three statements of the latter part. The first statement is mentioned in Example \ref{ex:1}. The orientation preserving
 diffeomorphism group of $S^3$ is known
 to be homotopy equivalent to $SO(4)$ and the natural inclusion of $SO(4)$, which we regard
 as the group of all the linear transformations on $S^3$, into
 the group of the orientation preserving diffeomorphisms give homotopy equivalences (see \cite{hatcher}). Thus, a round fold map mentioned in the second
 statement is in fact regarded as a $C^{\infty}$ trivial round fold map mentioned in Example \ref{ex:2}. This completes the proof of the second part. 

\ \ \ For the last statement, we need the facts that there exists an isomorphism from the $7$-dimensional oriented h-cobordism
 group onto the cyclic group $\mathbb{Z}/28\mathbb{Z}$ and that the values of the isomorphism of $16$ classes in the second statement is $0, 1, 3, 6, 7, 8, 10, 13, 14, 15, 17, 20, 21, 22, 24$ and $27$ of \cite{eellskuiper}.
 Every $28$ element of $\mathbb{Z}/28\mathbb{Z}$ is represented as a sum of two elements of these $16$ values. This means that every $7$-dimensional homotopy sphere $M$ is represented as
 a connected sum of two homotopy spheres in the second statement. From Theorem \ref{thm:5} (\ref{thm:5.1}), this completes the proof of the last statement. 
\end{proof}

\ \ \ We also have the following proposition. 

\begin{Prop}
\label{prop:2}
Let $M_1$ and $M_2$ be closed and connected $m$-dimensional manifolds.

\ \ \ Assume that there exists a round fold map $f_1:M_1 \rightarrow {\mathbb{R}}^n$ {\rm (}$m \geq n \geq 2${\rm )} and that $m>n$ holds. Assume also that
 there exists a connected component of the
 fiber of a point in a proper core of $f_1$ diffeomorphic to $S^{m-n}$ and that the embedding of
 the connected component into $M_1$ is a trivial embedding. Furthermore, we also assume
 the existence of a round fold map $f_2:M_2 \rightarrow {\mathbb{R}}^n$ satisfying the following two.
\begin{enumerate}
\item For a small closed tubular neighborhood
 $N(C)$ of the connected component $C$ of $\partial f_2(M_2)$ bounding the unbounded connected
 component of ${\mathbb{R}}^n-{\rm Int} f_2(M_2)$, ${f_2}^{-1}(N(C))$ has the structure
 of a trivial $D^{m-n+1}$-bundle over the connected component
 $C^{\prime}$ of $\partial N(C)$ in $f_2(M_2)$.
\item $f_2 {\mid}_{{f_2}^{-1}({C}^{\prime})}$ gives the structure of a subbundle of the previous bundle ${f_2}^{-1}(N(C))$ over $C^{\prime}$.
\end{enumerate}
Then, on any manifold $M$ represented as a connected sum of the manifolds $M_1$ and $M_2$, we obtain a round fold map $f:M \rightarrow {\mathbb{R}}^n$ by a {\it canonical
 combining operation} to the pair $(f_1,f_2)$ of the maps defined in the same manner.  
\end{Prop}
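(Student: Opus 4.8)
The plan is to reproduce the proof of Proposition \ref{prop:1} in its geometric skeleton, performing the canonical combining operation on the very same two pieces, but relocating the triviality hypothesis from the $M_2$-side to the $M_1$-side. First I would fix a proper core $P_1$ of $f_1$ and let $V_1$ be the connected component of ${f_1}^{-1}(P_1)$ carrying the distinguished $S^{m-n}$ fiber; by the structure of round fold maps over the proper core recalled in Section \ref{sec:2}, the restriction $f_1 {\mid}_{V_1}$ realizes $V_1$ as a trivial $S^{m-n}$-bundle over $P_1$, so that $V_1 \cong D^n \times S^{m-n}$, the set $V_1$ is a closed tubular neighborhood of the distinguished fiber, and $\partial V_1 \cong S^{n-1} \times S^{m-n}$. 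On the other side I would put $V_2:={f_2}^{-1}(N(C))$; the two assumed conditions on $f_2$ give $V_2 \cong S^{n-1} \times D^{m-n+1}$ as a trivial linear $D^{m-n+1}$-bundle over $C^{\prime}$ with $f_2 {\mid}_{{f_2}^{-1}(C^{\prime})}$ its bounding sphere subbundle, so $\partial V_2={f_2}^{-1}(C^{\prime}) \cong S^{n-1} \times S^{m-n}$ as well. Both boundaries are thus trivial $S^{m-n}$-bundles over $S^{n-1}$, a bundle isomorphism $\Phi:\partial V_2 \to \partial V_1$ exists, and gluing $f_1 {\mid}_{M_1-{\rm Int} V_1}$ to $f_2 {\mid}_{M_2-{\rm Int} V_2}$ along $\Phi$, after rescaling the targets so that the singular value spheres of the inner piece $f_2$ nest concentrically inside those of the outer piece $f_1$, produces a candidate map $f:M \to {\mathbb{R}}^n$ exactly as before.

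The heart of the argument is the identification of the glued source with a connected sum $M_1 \# M_2$, and this is where I would invoke the new hypothesis that the distinguished $S^{m-n}$ fiber is trivially embedded in $M_1$. This places the tubular neighborhood $V_1$ inside a smoothly embedded disc $D^m \subset M_1$ and simultaneously realizes $V_1 \cong D^n \times S^{m-n}$ as the standard piece of the decomposition $S^m=(D^n \times S^{m-n}) {\bigcup} (S^{n-1} \times D^{m-n+1})$ along $S^{n-1} \times S^{m-n}$, whence $S^m-{\rm Int} V_1 \cong S^{n-1} \times D^{m-n+1} \cong V_2$. Splitting $M_1-{\rm Int} V_1=(M_1-{\rm Int} D^m) {\bigcup} (D^m-{\rm Int} V_1)$ and regrouping, I would then run the diffeomorphism chain
\begin{eqnarray*}
& & (M_1-{\rm Int} V_1) {\bigcup}_{\Phi} (M_2-{\rm Int} V_2) \\
& \cong & ((M_1-{\rm Int} D^m) {\bigcup} (D^m-{\rm Int} V_1)) {\bigcup}_{\Phi} (M_2-{\rm Int} V_2) \\
& \cong & (M_1-{\rm Int} D^m) {\bigcup}_{\Psi} ((S^m-({\rm Int} V_1 \sqcup {\rm Int} D^m)) {\bigcup}_{\Phi} (M_2-{\rm Int} V_2)) \\
& \cong & (M_1-{\rm Int} D^m) {\bigcup}_{\Psi} (M_2-{\rm Int} D^m),
\end{eqnarray*}
whose last step uses $S^m-{\rm Int} V_1 \cong V_2$ to recognise $(S^m-({\rm Int} V_1 \sqcup {\rm Int} D^m)) {\bigcup}_{\Phi} (M_2-{\rm Int} V_2) \cong (V_2-{\rm Int} D^m) {\bigcup}_{\Phi} (M_2-{\rm Int} V_2) \cong M_2-{\rm Int} D^m$. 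This is precisely the role played in Proposition \ref{prop:1} by the trivial embedding of ${f_2}^{-1}(C)$, now transferred to $M_1$; as there, every connected sum of $M_1$ and $M_2$ is realised by a suitable choice of the extension diffeomorphism $\Psi$ of $\partial D^m$.

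It then remains to confirm that $f$ is genuinely a round fold map with the two stated $C^{\infty}$-equivalences. Outside a core disc $Q$ the map $f$ agrees with the outer part $f_1 {\mid}_{{f_1}^{-1}({\mathbb{R}}^n-{\rm Int} P_1)}$ of $f_1$, inside $Q$ it agrees with the inner part $f_2 {\mid}_{{f_2}^{-1}({\mathbb{R}}^n-{\rm Int} N(C))}$ of $f_2$, and the gluing takes place over a regular value sphere $\partial Q$ above which both pieces restrict to the common regular fiber $S^{m-n}$; so the singular value set of $f$ is the disjoint union of the concentric spheres coming from $f_1$ and those coming from $f_2$, as required by Definition \ref{def:1}.

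I expect the main obstacle to lie in the compatibility demanded of $\Phi$ in the first step. The isomorphism $\Phi$ must at once be fibrewise with respect to the Morse data on the $S^{m-n}$ fibers, so that the two maps glue to a smooth fold map whose surrounding bundle structure is respected and whose singular value spheres stay concentric, and it must be chosen so that the filled-in complement reconstitutes $M_2-{\rm Int} D^m$ in the chain above. Reconciling these two demands forces one to absorb the mapping class group ambiguity of $S^{n-1} \times S^{m-n}$ into the auxiliary diffeomorphism $\Psi$ of $\partial D^m$, exactly as in the proof of Proposition \ref{prop:1}; since the two conditions on $f_2$ guarantee that the bundles $V_2$ and $\partial V_2$ are trivial, this ambiguity can indeed be arranged away, and the construction then defines the canonical combining operation for the pair $(f_1,f_2)$ in the present setting.
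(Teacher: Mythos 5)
Your proposal follows essentially the same route as the paper's own proof: you set up $V_1$ as the trivially embedded $S^{m-n}$-bundle component over the proper core of $f_1$ and $V_2={f_2}^{-1}(N(C))$ as the trivial $D^{m-n+1}$-bundle, and you run the same diffeomorphism chain that splits $M_1$ (rather than $M_2$, as in Proposition \ref{prop:1}) through an embedded $D^m$ containing $V_1$ to identify the glued manifold with $(M_1-{\rm Int}\, D^m){\bigcup}_{\Psi}(M_2-{\rm Int}\, D^m)$. This is exactly the paper's argument, with your added justification of the final step via $S^m-{\rm Int}\, V_1\cong V_2$ being a welcome clarification.
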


\begin{proof}
We can prove this proposition by the same construction as that of the proof of Proposition \ref{prop:1}. However, we present
 the construction again. \\ 
\ \ \ Let $P_1$ be a proper core of $f_1$ and $V_1$ be a connected component of ${f_1}^{-1}(P_1)$ such that for $p \in P_1$, the embedding of ${f_1}^{-1}(p) \bigcap V_1$ into $M_1$ is
 a trivial embedding. We note that $f_1 {\mid}_{V_1}:V_1 \rightarrow P_1$ gives the
 structure of a trivial bundle. Let $P_2:=N(C)$ and $V_2:={f_2}^{-1}(P_2)$.

\ \ \ Similarly to the proof of Proposition \ref{prop:1}, for any diffeomorphism
 $\Psi:\partial D^m \rightarrow \partial D^m$ extending to a diffeomorphism on $D^m$ or from $M_2-(M_2-D^m)$
 onto $M_1-(M_1-D^m)$, we may ragard that there exists a diffeomorphism $\Phi:\partial V_2 \rightarrow \partial V_1$ regarded as a bundle
 isomorphism between the two trivial $S^{m-n+1}$-bundles over the ($n-1$)-dimensional standard spheres inducing a diffeomorphism
 between the base spaces and that the following relation holds, where for two smooth manifolds $X_1$ and $X_2$, $X_1 \cong X_2$ means
 that $X_1$ and $X_2$ are diffeomorphic as in the proof of Proposition \ref{prop:1}. \\ 

\begin{eqnarray*}
 & & (M_1-{\rm Int} V_1) {\bigcup}_{\Phi} (M_2-{\rm Int} V_2) \\
 & \cong & (M_1-{\rm Int} V_1) {\bigcup}_{\Phi} (M_2-{\rm Int} V_2) \\
 & \cong & ((M_1-{\rm Int} D^m) {\bigcup} (D^m-{\rm Int} V_1)) {\bigcup}_{\Phi} (M_2-{\rm Int} V_2) \\
 & \cong & ((M_1-{\rm Int} D^m) {\bigcup}_{\Psi} (S^m-({\rm Int} D^m \sqcup {\rm Int} V_1))) {\bigcup}_{\Phi} (M_2-{\rm Int} V_2) \\
% & \cong &((M_1-{\rm Int} D^m) {\bigcup}_{\Psi} (S^m-({\rm Int} D^m \sqcup {\rm Int} V_1))) {\bigcup}_{\Phi} (M_2-{\rm Int} V_2) \\
 & \cong & (M_1-{\rm Int} D^m) {\bigcup}_{\Psi} (M_2-{\rm Int} D^m)  
\end{eqnarray*}

\ \ \ This means that the resulting manifold is a connected sum $M$ of the manifolds $M_1$ and $M_2$ and that $M$ admits a round fold map $f:M \rightarrow {\mathbb{R}}^n$. More precisely, $f$
 is obtained by gluing the two maps ${f_1} {\mid}_{M_1-{\rm Int} V_1}$ and ${f_2} {\mid}_{M_2-{\rm Int} V_2}$. We also note that we can realize each connected sum
 of the manifolds $M_1$ and $M_2$ as the resulting manifold $M$ and that we obtain a round fold map $f:M \rightarrow {\mathbb{R}}^n$.
\end{proof}

For example, we have the following theorem.

\begin{Thm}
\label{thm:7}
If a closed and connected manifold $M$ of dimension $7$ admits a round fold map into ${\mathbb{R}}^4$, then for
 any homotopy sphere $\Sigma$ of dimension $7$, any manifold represented as a connected sum of $M$ and $\Sigma$ admits a round fold map into ${\mathbb{R}}^4$  
\end{Thm}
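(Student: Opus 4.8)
The plan is to deduce Theorem~\ref{thm:7} in a single application of Proposition~\ref{prop:2} to the pair $(f_1,f_2)=(f_\Sigma,f_M)$, where $f_M:M\to{\mathbb R}^4$ is the given round fold map and $f_\Sigma:\Sigma\to{\mathbb R}^4$ is a round fold map supplied by Theorem~\ref{thm:1}. Here the dimension pair is $(m,n)=(7,4)$, so $m>n\ge 2$, $m-n=3$, $S^{m-n}=S^3$, and $3n=12<14=2m$. I would put $M_1:=\Sigma$ and $M_2:=M$ and then verify the two families of hypotheses of Proposition~\ref{prop:2}; its conclusion is exactly that every connected sum $M\#\Sigma$ carries a round fold map into ${\mathbb R}^4$, obtained by a canonical combining operation to $(f_\Sigma,f_M)$.

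First I would check the hypotheses on $f_1=f_\Sigma$. Theorem~\ref{thm:1} provides a round fold map on $\Sigma$ whose regular fibers are disjoint unions of copies of $S^3$; since a proper core consists of regular values, the fiber over a point of a proper core has a connected component diffeomorphic to $S^{m-n}=S^3$. For the triviality of its embedding I would argue exactly as in the proof of Corollary~\ref{cor:1}: because $m-n=n-1=3$ the relevant sphere is $S^3$, and since $3n<2m$ the theory of Haefliger (\cite{haefliger}, \cite{haefliger2}) shows that all embeddings of $S^3$ into the homotopy $7$-sphere $\Sigma$ are smoothly isotopic. As a standard $S^3$ contained in a smoothly embedded open disc is trivially embedded, so is our component. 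This establishes the hypothesis on $f_1$.

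The key step is to verify the two conditions on $f_2=f_M$ \emph{without any extra assumption on $M$}. Let $C:=\partial f_M(M)$ be the singular value component bounding the unbounded component of ${\mathbb R}^4-{\rm Int}\, f_M(M)$. Since nothing lies over the unbounded region, $C$ is the image of a connected component of the singular set consisting of points of index $0$, so by the structure recalled in Section~\ref{sec:2} a small tubular neighbourhood $N(C)$ has $f_M^{-1}(N(C))$ a \emph{linear} $D^{m-n+1}=D^4$-bundle over the inner boundary component $C^{\prime}\cong S^{n-1}=S^3$, with $f_M{\mid}_{f_M^{-1}(C^{\prime})}$ its boundary $S^3$-subbundle. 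Linear $D^4$-bundles over $S^3$ are classified by $\pi_2(SO(4))$, and $\pi_2$ of any Lie group vanishes, so $\pi_2(SO(4))=0$ and the bundle is automatically trivial. Hence conditions (1) and (2) of Proposition~\ref{prop:2} hold for $f_M$, for \emph{every} round fold map $M\to{\mathbb R}^4$.

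I expect the main obstacle to be precisely this last verification. The theorem allows $M$ to be completely arbitrary among $7$-manifolds admitting a round fold map into ${\mathbb R}^4$, so the argument cannot impose any condition on $f_M$ by hand and must instead extract the needed $D^4$-bundle triviality from the dimension pair alone; the vanishing $\pi_2(SO(4))=0$ is what makes this possible and is the crux of the proof. Granting the three verifications above, Proposition~\ref{prop:2} applies to $(f_\Sigma,f_M)$ and yields a round fold map $f:M\#\Sigma\to{\mathbb R}^4$ on any connected sum, which completes the argument.
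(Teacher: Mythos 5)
Your proposal is correct and follows essentially the same route as the paper: take the round fold map on $\Sigma$ from Theorem \ref{thm:1} as $f_1$, verify the triviality of the embedded $S^3$ fiber component via Haefliger's isotopy theorem ($12<14$), verify the conditions on $f_2=f_M$ for an arbitrary round fold map via the vanishing of ${\pi}_2(SO(4))$ (the paper computes it as ${\pi}_2(S^3)\oplus{\pi}_2(SO(3))\cong\{0\}$, you invoke the general fact for Lie groups), and apply Proposition \ref{prop:2}. You also correctly identify the triviality of the linear $D^4$-bundle over $S^3$ as the crux that lets $M$ be arbitrary.
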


\begin{proof}
Any linear $D^4$-bundle over $S^3$ is trivial since ${\pi}_2(SO(4)) \cong {\pi}_2(S^3) \oplus {\pi}_2(SO(3)) \cong \{0\}$ holds. Let
 $f:M \rightarrow {\mathbb{R}}^4$ be a round fold map. For a small closed tubular neighborhood
 $N(C)$ of the connected component $C$ of $\partial f(M)$ bounding the unbounded connected
 component of ${\mathbb{R}}^4-{\rm Int} f(M)$, ${f}^{-1}(N(C))$ has the structure
 of a trivial linear $D^4$-bundle over the connected component
 $C^{\prime}$ of $\partial N(C)$ in $f(M)$ and $f {\mid}_{f^{-1}({C}^{\prime})}$ gives the structure of a subbundle of the bundle $f^{-1}(N(C))$. Thus, $f$ satisfies the conditions posed on the map $f_2$ in Proposition \ref{prop:2}. \\
\ \ \ Two smooth embeddings of $S^3$ into a $7$-dimensional homotopy sphere are always smoothly isotopic from the theory of \cite{haefliger} or \cite{haefliger2} with the inequality $3 \times (3+1)=12<2 \times 7=14$. We may apply Proposition \ref{prop:2} to
 a round fold map in Theorem \ref{thm:1} and $f:M \rightarrow {\mathbb{R}}^4$ to construct a desired round fold map. This completes the proof.
\end{proof}

\begin{Rem}
If a closed and connected manifold $M$ of dimension $7$ admits a round fold map into ${\mathbb{R}}^2$, then a result similar
 to Theorem \ref{thm:7} holds. In fact, by \cite{saeki2} or Example \ref{ex:1}, every homotopy sphere of dimension $7$ admits a round fold map whose singular set is connected into the plane and
 we only consider a connected sum of this map and the given round fold map (for a connected sum of such
 maps, for example, see section 5 of \cite{saeki2}, in which a connected sum of two special generic maps into
 the plane was introduced). 
 For the case where $n=3,5,6,7$ holds, we don't know whether a result similar to Theorem \ref{thm:7} holds for
 round fold maps into ${\mathbb{R}}^n$.
\end{Rem}

We also have the following theorem.

\begin{Thm}
\label{thm:8}
\begin{enumerate}
\item
\label{thm:8.1}
 Let $M_1$ and $M_2$ be closed and connected $3$-dimensional manifolds. Let $M_1$ admit a round fold map $f_1:M_1 \rightarrow {\mathbb{R}}^2$ whose image $f_1(M_1)$ is diffeomorphic to $D^2$. Let $M_2$ admit a round
 fold map $f_2:M_2 \rightarrow {\mathbb{R}}^2$ such that for the connected component $C$ of the
 boundary $\partial f_2(M_2)$ of the image $f_2(M_2)$ bounding the unbounded connected component
 of ${\mathbb{R}}^n-{\rm Int} f_2(M_2)$, the 1st Stiefel-Whitney class of the manifold $M_2$ vanishes on the cycle
 represented by a circle ${f_2}^{-1}(C)$.
% For $i=1,2$, let $M_i$ admit a round fold map $f_i:M_i \rightarrow {\mathbb{R}}^2$ such that
% for the connected component $C_i$ of the
% boundary $\partial f_i(M_i)$ of the image $f_i(M_i)$ bounding the unbounded connected component of ${\mathbb{R}}^n-{\rm Int} f_i(M_i)$, the 1st Stiefel-Whitney class of the manifold $M_2$ vanishes on the cycle
% represented by a circle ${f_i}^{-1}(C_i)$.  
Then, any manifold $M$ represented as a connected sum of the manifolds $M_1$ and $M_2$ admits a round fold map.
\item
\label{thm:8.2}
 Let $M_1$ and $M_2$ be closed and connected $7$-dimensional manifolds and let $M_1$ admit
 a round fold map $f_1:M_1 \rightarrow {\mathbb{R}}^4$ such that the fiber of a point in a proper core of $f_1$ has a connected component diffeomorphic to $S^3$. Then, any
 manifold $M$ represented as a connected
 sum of the manifolds $M_1$ and $M_2$ admit a round fold map. 

\item
\label{thm:8.3}
 Let $M_1$ and $M_2$ be closed and connected $15$-dimensional manifolds. Let $M_1$ admit
 a round fold map $f_1:M_1 \rightarrow {\mathbb{R}}^8$ admit a round fold map such that the fiber of a point in a proper core of $f_1$ has a connected component diffeomorphic to $S^7$. Furthermore, let $M_2$ admit a round
 fold map $f_2:M_2 \rightarrow {\mathbb{R}}^2$ such that for the connected component $C$ of the
 boundary $\partial f_2(M_2)$ of the image $f_2(M_2)$ bounding the unbounded connected component
 of ${\mathbb{R}}^n-{\rm Int} f_2(M_2)$ and a closed tubular neighborhood $N(C)$ of $C$, $f^{-1}(N(C))$ has
 the structure of a trivial linear $D^8$-bundle over $C$ and that $f {\mid}_{\partial f^{-1}(N(C))}:\partial f^{-1}(N(C)) \rightarrow C$ gives the structure of its subbundle. Then, any
 manifold $M$ represented as a connected
 sum of the manifolds $M_1$ and $M_2$ admit a round fold map. 
\end{enumerate} 
\end{Thm}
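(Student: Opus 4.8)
The plan is to prove all three parts uniformly by checking the hypotheses of Proposition \ref{prop:2} for an appropriate pairing of the given maps and then invoking its canonical combining operation. Note first that in each case $(m,n)\in\{(3,2),(7,4),(15,8)\}$ one has $m-n=n-1$, so the fibre sphere $S^{m-n}$ and the singular sphere $S^{n-1}$ have the same dimension $k:=m-n$, and a small closed tubular neighbourhood of the outer fold is an $(m-n+1)$-disc bundle, i.e. a $D^{k+1}$-bundle, over $S^{n-1}=S^{k}$. I would let the manifold carrying the $S^{m-n}$-fibre component (this is $M_1$ in parts \ref{thm:8.2} and \ref{thm:8.3}, and $M_1$ with $f_1(M_1)\cong D^2$ in part \ref{thm:8.1}) play the role of $f_1$ in Proposition \ref{prop:2}, and the remaining manifold play the role of $f_2$. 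Thus exactly two facts must be verified in each part: (i) the chosen $S^{m-n}$-fibre component over a proper core is trivially embedded, and (ii) the disc bundle $f_2^{-1}(N(C))$ over the outer singular sphere is a trivial linear $D^{m-n+1}$-bundle over $C'$ on which $f_2$ restricts to the boundary sub-bundle.

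For (ii) I would argue bundle-theoretically: a linear $D^{m-n+1}$-bundle over $S^{n-1}$ is classified by its clutching element in $\pi_{n-2}(SO(m-n+1))$. In part \ref{thm:8.2} this is $\pi_2(SO(4))\cong\pi_2(S^3)\oplus\pi_2(SO(3))=0$, precisely as in the proof of Theorem \ref{thm:7}, so every such $D^4$-bundle over $S^3$ is trivial and (ii) is automatic. In part \ref{thm:8.3} the relevant group is $\pi_6(SO(8))$, which is stable and vanishes; here the hypothesis already supplies the trivial $D^8$-bundle and the sub-bundle condition, so nothing further is needed. In part \ref{thm:8.1} the outer neighbourhood is a $D^2$-bundle over the circle $f_2^{-1}(C)$, and such a bundle is trivial exactly when it is orientable; since $f_2^{-1}(N(C))$ deformation retracts onto $f_2^{-1}(C)$, its orientability is measured by $w_1(M_2)$ evaluated on the class of $f_2^{-1}(C)$, so the assumed vanishing of this Stiefel--Whitney number is exactly what makes the bundle trivial, the sub-bundle on $C'$ being the boundary circle bundle coming from the fold model.

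For (i) the two regimes differ. In part \ref{thm:8.1}, where $m-n=1$, the fibre over a proper core is a disjoint union of circles; using that $f_1(M_1)$ is a disc I would show the relevant circle component bounds an embedded $2$-disc (capped at an index-$0$ fold), so a tubular neighbourhood of that disc is a ball in which the circle is the unknot, giving a trivial embedding. In parts \ref{thm:8.2} and \ref{thm:8.3} the fibre sphere $S^{m-n}$ has trivial normal bundle, since it is a fibre of the trivial product bundle $f_1^{-1}(P_1)\supset S^{m-n}\times D^n$, and the metastable inequality $3(m-n+1)<2m$ holds ($12<14$ and $24<30$ respectively); thus Haefliger's theorem (\cite{haefliger}, \cite{haefliger2}), already used in Corollary \ref{cor:1} and Theorem \ref{thm:7}, guarantees that once $S^{m-n}$ is isotoped into an embedded $m$-disc it is unknotted there.

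The main obstacle is precisely this localization step in parts \ref{thm:8.2} and \ref{thm:8.3}: to apply Haefliger unknotting I must first isotope the fibre sphere into a ball, yet for a general source $M_1$ the fibre is only guaranteed to have trivial normal bundle and need not be null-homotopic. The key point I would have to establish is that the chosen $S^{m-n}$-fibre component bounds an embedded $(m-n+1)$-disc in $M_1$ — arising from an index-$0$ cap along the radial direction of the round fold map — which simultaneously produces the ball and renders the sphere null-homotopic; combined with the metastable Haefliger unknotting this yields the trivial embedding required by Proposition \ref{prop:2}. Once (i) and (ii) are secured, Proposition \ref{prop:2} applies verbatim and the canonical combining operation on the pair $(f_1,f_2)$ produces the desired round fold map on any connected sum $M$ of $M_1$ and $M_2$, finishing all three parts.
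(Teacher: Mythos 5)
Your proposal has a genuine gap in step (i), and it is located exactly where you suspect. Applying Proposition \ref{prop:2} directly to the pair $(f_1,f_2)$ forces you to verify that the chosen $S^{m-n}$-component of the proper-core fiber of $f_1$ is \emph{trivially embedded} in $M_1$, but this is not among the hypotheses of Theorem \ref{thm:8} and is false in general: for the round fold map of Theorem \ref{thm:4} on an $S^{m-n}$-bundle over $S^n$ (already for $S^3\times S^4$ or $S^2\times S^1$), the proper-core fiber components are fibers of the bundle and are not even null-homotopic in the total space, so no isotopy into a ball exists and Haefliger unknotting never gets off the ground. Your proposed repair --- that the fiber sphere ``bounds an embedded $(m-n+1)$-disc arising from an index-$0$ cap along the radial direction'' --- does not hold: the index-$0$ definite folds cap off the preimage of the \emph{outermost} singular value component, not the components of the fiber over the proper core, which in general undergo arbitrary surgeries as one moves radially outward. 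Your verifications in (ii) ($\pi_2(SO(4))=0$, the $w_1$ orientability criterion for the $D^2$-bundle, and the explicit hypothesis in the $15$-dimensional case) do agree with the paper.

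The paper sidesteps the problematic condition by a two-step argument. First it builds, via Theorem \ref{thm:4}, a round fold map $f_0$ on the standard sphere $S^m$ (using the linear $S^{m-n}$-bundle structure of $S^3$, $S^7$, $S^{15}$ over $S^2$, $S^4$, $S^8$), for which the proper-core fiber components \emph{are} trivially embedded, and applies Proposition \ref{prop:2} to the pair $(f_0,f_2)$ --- so the triviality requirement lands on $S^m$, where it is automatic, while $f_2$ only needs to satisfy the trivial-disc-bundle condition you verified in (ii). Since $M_2\# S^m\cong M_2$, this produces a new map ${f_2}^{\prime}:M_2\rightarrow{\mathbb{R}}^n$ whose outermost fold preimage now comes from $f_0$ and hence, by the last condition of Theorem \ref{thm:4}, is trivially embedded in $M_2$. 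The paper then applies Proposition \ref{prop:1} to $(f_1,{f_2}^{\prime})$; Proposition \ref{prop:1} asks only that the proper-core fiber of $f_1$ have an $S^{m-n}$-component, with no triviality of its embedding, and puts the trivial-embedding condition on the outer fold of ${f_2}^{\prime}$, which has just been arranged. You would need to adopt this (or an equivalent) rerouting for your argument to close.
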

\begin{proof}
We prove (\ref{thm:8.1}). As in the proof of Theorem \ref{thm:4}, we can construct a round
 fold map $f_0:S^3 \rightarrow {\mathbb{R}}^2$. The pair of the map $f_0$ and the map $f_2$ satisfies the assumption
 of Proposition \ref{prop:2}. In fact, the embeddings into $S^3$ of both connected components of the fiber of a point in a proper core of former map $f_0$ are
 trivial embeddings and the assumption on the 1st Stiefel-Whitney class of $M_2$ means that for a small closed tubular
 neighborhood $N(C)$ of $C$, ${f_2}^{-1}(N(C))$ has the structure of a trivial linear $D^2$-bundle over $C$ and
 that $f_2 {\mid}_{\partial {f_2}^{-1}(N(C))}:\partial {f_2}^{-1}(N(C)) \rightarrow C$ gives the structure of its subbundle. By applying
 Proposition \ref{prop:2}, we obtain a new round fold map ${f_2}^{\prime}:M_2 \rightarrow {\mathbb{R}}^2$. Moreover, we can construct
 the map ${f_2}^{\prime}$ so that the pair of the maps $f_1$ and ${f_2}^{\prime}$ satisfies the assumption
 of Proposition \ref{prop:1} by the last condition of the resulting map in Theorem \ref{thm:4}. By applying Proposition \ref{prop:1}, we obtain a desired round fold map from $M$ into ${\mathbb{R}}^2$.

\ \ \ We discuss (\ref{thm:8.2}). $S^7$ has the structure of an $S^3$-bundle over $S^4$ and as
 in the proof of Theorem \ref{thm:4}, we can construct a round
 fold map $f_0:S^7 \rightarrow {\mathbb{R}}^4$. For the connected component $C$ of the
 boundary $\partial f_2(M_2)$ of the image $f_2(M_2)$ bounding the unbounded connected component
 of ${\mathbb{R}}^n-{\rm Int} f_2(M_2)$ and for a small closed tubular
 neighborhood $N(C)$ of $C$, $f^{-1}(N(C))$ has the structure of a trivial linear $D^4$-bundle over $C$ by the fact that ${\pi}_2(SO(4)) \cong \{0\}$ holds and
 that $f {\mid}_{\partial f^{-1}(N(C))}:\partial f^{-1}(N(C)) \rightarrow C$ gives the structure of its subbundle. These two facts mean that we can prove (\ref{thm:8.2}) similarly.

\ \ \ We can prove (\ref{thm:8.3}) similarly and this completes the proof of all the statements.
\end{proof}

\begin{Thm}
\label{thm:9}
\begin{enumerate}
\item
\label{thm:9.1}
 Let $M_1$ and $M_2$ be closed and connected $3$-dimensional manifolds. For $i=1,2$, let $M_i$ admit a round
 fold map $f_i:M_1 \rightarrow {\mathbb{R}}^2$ such that for the connected component $C_i$ of the
 boundary $\partial f_i(M_i)$ of the image $f_i(M_i)$ bounding the unbounded connected component
 of ${\mathbb{R}}^n-{\rm Int} f_i(M_i)$, the 1st Stiefel-Whitney class of the manifold $M_i$ vanishes on the cycle
 represented by a circle ${f_i}^{-1}(C_i)$.
% For $i=1,2$, let $M_i$ admit a round fold map $f_i:M_i \rightarrow {\mathbb{R}}^2$ such that
% for the connected component $C_i$ of the
% boundary $\partial f_i(M_i)$ of the image $f_i(M_i)$ bounding the unbounded connected component of ${\mathbb{R}}^2-{\rm Int} f_i(M_i)$, the 1st Stiefel-Whitney class of the manifold $M_2$ vanishes on the cycle
% represented by a circle ${f_i}^{-1}(C_i)$.  
Then, any manifold $M$ represented as a connected sum of the manifolds $M_1$ and $M_2$ admits a round fold map into ${\mathbb{R}}^2$ again.
\item
\label{thm:9.2}
 Let $M_1$ and $M_2$ be closed and connected $7$-dimensional manifolds admitting round fold maps into ${\mathbb{R}}^4$. Then, any
 manifold $M$ represented as a connected
 sum of the manifolds $M_1$ and $M_2$ admits a round fold map into ${\mathbb{R}}^4$ again. 

\item
\label{thm:9.3}
 Let $M_1$ and $M_2$ be closed and connected $15$-dimensional manifolds. For $i=1,2$, let $M_i$ admit a round
 fold map $f_i:M_1 \rightarrow {\mathbb{R}}^8$ such that for the connected component $C_i$ of the
 boundary $\partial f_i(M_i)$ of the image $f_i(M_i)$ bounding the unbounded connected component
 of ${\mathbb{R}}^8-{\rm Int} f_i(M_i)$ and a closed tubular neighborhood $N(C_i)$, ${f_i}^{-1}(N(C_i))$ has the structure of a
 trivial linear $D^8$-bundle over $C_i$ and that $f {\mid}_{\partial f^{-1}(N(C_i))}:\partial f^{-1}(N(C_i)) \rightarrow C_i$ gives
 the structure of its subbundle. Then, any manifold $M$ represented as a connected
 sum of the manifolds $M_1$ and $M_2$ admits a round fold map into ${\mathbb{R}}^8$ again.
\end{enumerate} 
\end{Thm}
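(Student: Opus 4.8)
The plan is to treat the three cases (\ref{thm:9.1}), (\ref{thm:9.2}) and (\ref{thm:9.3}) by a single argument, using the classical fact recalled before Theorem \ref{thm:5} that for each pair $(m,n)=(3,2),(7,4),(15,8)$ the sphere $S^m$ carries the structure of a linear $S^{m-n}$-bundle over $S^n$. The argument parallels the proof of Theorem \ref{thm:8}, but the present situation is more symmetric: \emph{both} $f_1$ and $f_2$ are assumed only to have the outer disc-bundle property, and \emph{neither} is assumed to carry a standard sphere $S^{m-n}$ in a proper-core fiber, so the first task is to manufacture such a sphere. To this end I would apply Theorem \ref{thm:4}, as specialised in Example \ref{ex:2}, to the linear $S^{m-n}$-bundle $S^m \to S^n$, obtaining a $C^{\infty}$ trivial round fold map $f_0:S^m \to {\mathbb{R}}^n$ whose singular set has two components and the fiber of a point in a proper core of which is a disjoint union of two copies of $S^{m-n}$. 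Each such copy is a fiber of the bundle $S^m \to S^n$ and is trivially embedded in $S^m$: for $(m,n)=(3,2)$ it is an unknotted circle in $S^3$, while for $(7,4)$ and $(15,8)$ the embeddings $S^3 \subset S^7$ and $S^7 \subset S^{15}$ are standard by the unknotting results of \cite{haefliger} and \cite{haefliger2}.

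Next I would apply the canonical combining operation of Proposition \ref{prop:2} to the pair $(f_0,f_1)$, with $f_0$ in the role of $f_1$ and $f_1$ in the role of $f_2$. Here $f_0$ supplies a trivially embedded $S^{m-n}$ in a proper-core fiber (the component $V_1$ that is deleted), while $f_1$ supplies the required outer datum. In case (\ref{thm:9.1}) the vanishing of the $1$st Stiefel-Whitney class of $M_1$ on the cycle carried by ${f_1}^{-1}(C_1)$ forces the linear $D^2$-bundle ${f_1}^{-1}(N(C_1))$ over $C_1 \cong S^1$ to be trivial; in case (\ref{thm:9.2}) this triviality is automatic because ${\pi}_2(SO(4)) \cong \{0\}$, so every linear $D^4$-bundle over $S^3$ is trivial, exactly as in the proof of Theorem \ref{thm:7}; and in case (\ref{thm:9.3}) it is precisely the hypothesis imposed on $f_1$. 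Since $S^m \# M_1 \cong M_1$, Proposition \ref{prop:2} then yields a new round fold map ${f_1}^{\prime}:M_1 \to {\mathbb{R}}^n$. The decisive point is that the \emph{second} copy of $S^{m-n}$ in the proper-core fiber of $f_0$ is not deleted by the operation: it lies over the innermost disc of the target, survives the cutting and regluing, and becomes a trivially embedded component of a proper-core fiber of ${f_1}^{\prime}$. Thus ${f_1}^{\prime}$ now satisfies the hypothesis imposed on $f_1$ in Proposition \ref{prop:2}.

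Finally I would apply the canonical combining operation of Proposition \ref{prop:2} once more, this time to the pair $({f_1}^{\prime},f_2)$, with ${f_1}^{\prime}$ in the role of $f_1$ and $f_2$ in the role of $f_2$; the outer datum for $f_2$ holds for exactly the same reason as for $f_1$ in each of the three cases. As ${f_1}^{\prime}$ is defined on $M_1$ and $f_2$ on $M_2$, the resulting source manifold is a connected sum of $M_1$ and $M_2$, and since every connected sum can be realised in this way, this produces the desired round fold map $f:M \to {\mathbb{R}}^n$.

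The step I expect to be the main obstacle is the decisive point in the second paragraph. One must verify, by tracing through the cutting and regluing of Proposition \ref{prop:2}, that the undeleted copy of $S^{m-n}$ really does lie over the innermost disc of the target (so that it becomes a component of a proper-core fiber of ${f_1}^{\prime}$ rather than of a fiber over an intermediate annulus) and that it remains inside a smoothly embedded disc of $M_1$ disjoint from the gluing region (so that it stays trivially embedded). This rests on the structure of $f_0$ coming from Theorem \ref{thm:4} together with the unknottedness of $S^{m-n}$ in $S^m$ in these codimensions. The remaining verifications are routine: that an outer definite-fold component always produces a linear $D^{m-n+1}$-bundle carrying the evident $S^{m-n}$-subbundle, and the three reductions of disc-bundle triviality to the Stiefel-Whitney hypothesis, to ${\pi}_2(SO(4)) \cong \{0\}$, and to the standing hypothesis, respectively.
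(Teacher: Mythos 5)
Your overall strategy---build $f_0$ on $S^m$ from Theorem \ref{thm:4}, absorb it into $M_1$ by Proposition \ref{prop:2} to create a spherical proper-core fiber component, then combine with $M_2$---coincides with the paper's up to the final step, but that final step contains a genuine gap in case (\ref{thm:9.1}). You finish by applying Proposition \ref{prop:2} to the pair $({f_1}^{\prime},f_2)$, which requires the surviving copy $B$ of $S^{m-n}$ in a proper-core fiber of ${f_1}^{\prime}$ to be \emph{trivially embedded} in $M_1$. Precisely at the point you flag as the main obstacle, this fails for $(m,n)=(3,2)$: the two components of the proper-core fiber of $f_0$ are two fibres of the Hopf fibration, hence a Hopf link in $S^3$, so $B$ is not contained in any ball disjoint from the deleted piece $V_1$ (such a ball would force linking number $0$), contrary to your claim that it ``remains inside a smoothly embedded disc disjoint from the gluing region.'' Tracing the regluing, $S^3-{\rm Int}\, V_1$ is a solid torus with core isotopic to $B$, glued in place of the tubular neighborhood of the outermost fold circle ${f_1}^{-1}(C_1)$; under the identification of the resulting manifold with $M_1$, $B$ becomes isotopic to ${f_1}^{-1}(C_1)$ itself, and the hypothesis of (\ref{thm:9.1}) gives only orientability of its normal bundle, not unknottedness. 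So the trivial-embedding hypothesis of Proposition \ref{prop:2} is unavailable. In cases (\ref{thm:9.2}) and (\ref{thm:9.3}) your argument does go through, since $3(m-n+1)<2m$ there and Haefliger's theorem makes \emph{every} embedding of $S^{m-n}$ into the $m$-dimensional source manifold trivial.

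The paper avoids this by symmetrizing and by changing which proposition performs the last gluing: it applies Proposition \ref{prop:2} to $(f_0,f_i)$ for \emph{both} $i=1,2$, obtaining ${f_1}^{\prime}$ and ${f_2}^{\prime}$, and then combines them with Proposition \ref{prop:1} rather than Proposition \ref{prop:2}. Proposition \ref{prop:1} asks for no triviality of the proper-core fiber component of the first map (only that some component be diffeomorphic to $S^{m-n}$, which $B$ is); the triviality it does require concerns the \emph{outermost fold sphere} of the second map, and for ${f_2}^{\prime}$ that sphere is inherited from $f_0$ and bounds an embedded $n$-disc by the last condition of Theorem \ref{thm:4}, hence is trivially embedded. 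Replacing your last step by this one repairs case (\ref{thm:9.1}); the rest of your reductions (the Stiefel--Whitney argument, ${\pi}_2(SO(4)) \cong \{0\}$, and the standing hypothesis in the three cases) match the paper's.
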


\begin{proof}
We prove Theorem \ref{thm:9} (\ref{thm:9.1}). As in the proof of Theorem \ref{thm:4}, we can construct a round
 fold map $f_0:S^3 \rightarrow {\mathbb{R}}^2$. The pair of the map $f_0$ and the map $f_i$ satisfies the assumption
 of Proposition \ref{prop:2}, which follows from a discussion in the proof of Theorem \ref{thm:8} (\ref{thm:8.1}), and by applying
 Proposition \ref{prop:2}, we obtain a new round fold map ${f_i}^{\prime}:M_i \rightarrow {\mathbb{R}}^2$. Moreover, we can construct
 the map ${f_i}^{\prime}$ so that the pair of the maps ${f_1}^{\prime}$ and ${f_2}^{\prime}$ satisfies the assumption
 of Proposition \ref{prop:1} by the last condition of the resulting map in Theorem \ref{thm:4}. By applying
 Proposition \ref{prop:1}, we obtain a desired round fold map from the manifold $M$ represented as a connected sum of the two manifolds $M_1$ and $M_2$ into ${\mathbb{R}}^2$.

\ \ \ We can easily prove other statements similarly.
\end{proof}

\begin{Rem}
We do not know whether we can prove arguments similar to Theorems \ref{thm:8} and \ref{thm:9} for other pairs $(m,n)$ ($m>n\geq 2$) of dimensions.
\end{Rem}

\section{Constructions of $C^{\infty}$ trivial maps}
\label{sec:4}

We show the following theorem.

\begin{Thm}
\label{thm:10}
Let $M_1$ and $M_2$ be closed and connected $m$-dimensional manifolds. Assume that two $C^{\infty}$ trivial round fold maps $f_1:M_1 \rightarrow {\mathbb{R}}^n$
 and $f_2:M_2 \rightarrow {\mathbb{R}}^n$ {\rm (}$m>n \geq 2${\rm )} exist. We
 also assume the following conditions.
\begin{enumerate}
\item
\label{thm:10.1}
 The fiber of a point in a proper core of $f_1$ has a connected component diffeomorphic to $S^{m-n}$. 
\item
\label{thm:10.2}
 Isomorphisms on the trivial $S^{m-n}$-bundle over $S^{n-1}$ inducing the identity map on the base space $S^{n-1}$ are always smoothly isotopic to the identity map
 on the total space of the trivial bundle if they are orientation preserving diffeomorphisms on the total space.
\item
\label{thm:10.3}
 At least one of the following two holds.
\begin{enumerate}
\item
\label{thm:10.3.1}
 Let $C$ be the connected component of $\partial f_2(M_2)$ bounding the unbounded connected component of ${\mathbb{R}}^n-{\rm Int} f_2(M_2)$.
 Then, the embedding of ${f_2}^{-1}(C)$ into $M_2$ is a trivial embedding
 into $M_2$.
\item
\label{thm:10.3.2}
 The embedding of a connected component of the fiber of a point in a proper core of $f_1$ diffeomorphic to $S^{m-n}$ into
 $M_1$ is a trivial embedding. 
\end{enumerate}
\end{enumerate}
 Then, on any manifold $M$ represented as a connected sum of the manifolds $M_1$ and $M_2$, we obtain
 a $C^{\infty}$ trivial round fold map $f:M \rightarrow {\mathbb{R}}^n$ by a canonical combining operation to the pair $(f_1,f_2)$ of maps.
\end{Thm}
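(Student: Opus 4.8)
The plan is to run the canonical combining operation of Proposition \ref{prop:1} to produce the round fold map $f$, and then to verify separately that $f$ is $C^{\infty}$ trivial; the construction of $f$ is exactly the one used there, so the genuinely new work lies in the analysis of the surrounding bundle. Concretely, let $P_1$ be a proper core of $f_1$ and let $V_1$ be the connected component of ${f_1}^{-1}(P_1)$ carrying a trivial $S^{m-n}$-bundle over $P_1$, which exists by condition (\ref{thm:10.1}); let $C$ be the outermost component of $\partial f_2(M_2)$, let $P_2=N(C)$ be a small tubular neighborhood, and let $V_2={f_2}^{-1}(P_2)$, a trivial linear $D^{m-n+1}$-bundle over $C$. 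As in Proposition \ref{prop:1} I form $M=(M_1-\mathrm{Int}\,V_1)\cup_{\Phi}(M_2-\mathrm{Int}\,V_2)$ along a bundle isomorphism $\Phi\colon\partial V_2\to\partial V_1$ of trivial $S^{m-n}$-bundles over $S^{n-1}$ and glue $f_1|_{M_1-\mathrm{Int}\,V_1}$ with $f_2|_{M_2-\mathrm{Int}\,V_2}$ to obtain $f$. Condition (\ref{thm:10.3}) is what makes the result a genuine connected sum: if (\ref{thm:10.3.1}) holds I absorb $V_2$ into an embedded disc $D^m\subset M_2$ exactly as in Proposition \ref{prop:1}, and if instead (\ref{thm:10.3.2}) holds I run the symmetric absorption of $V_1$ into a disc $D^m\subset M_1$; either way Proposition \ref{prop:1} delivers a round fold map $f\colon M\to\mathbb{R}^n$.

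Next I would identify the surrounding bundle of $f$ over $S^{n-1}$. Its fiber $f^{-1}(L)$ splits along the single sphere $\partial V_1\cap f^{-1}(L)\cong S^{m-n}$ into an outer piece coming from $f_1$ and an inner piece coming from $f_2$. Because $f_1$ and $f_2$ are $C^{\infty}$ trivial, the restriction of the surrounding bundle of $f$ to each piece is a trivial sub-bundle over $S^{n-1}$: the inner piece is the trivial surrounding bundle of $f_2$ with the trivial $D^{m-n+1}$-sub-bundle $V_2$ deleted, while the outer piece is the trivial surrounding bundle of $f_1$ enlarged by the product sub-bundles coming from the components of ${f_1}^{-1}(P_1)$ other than $V_1$, which contribute only trivial product factors. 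Hence the surrounding bundle of $f$ is a fiberwise union of two trivial bundles over $S^{n-1}$ glued along the trivial $S^{m-n}$-bundle $\partial V_1\to S^{n-1}$, and all of its potential nontriviality is concentrated in the transition there, namely the bundle automorphism of the trivial $S^{m-n}$-bundle over $S^{n-1}$ determined by $\Phi$.

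Finally I would trivialize that transition. Condition (\ref{thm:10.3}) gives me the freedom to choose $\Phi$ to be a standard bundle isomorphism inducing the identity on the base $S^{n-1}$, and I would arrange it to be orientation preserving on the total space $S^{m-n}\times S^{n-1}$; condition (\ref{thm:10.2}) then asserts that such an automorphism is smoothly isotopic to the identity, so the gluing can be undone by an isotopy and the surrounding bundle of $f$ is trivial, i.e. $f$ is $C^{\infty}$ trivial. The step I expect to be the main obstacle is precisely this orientation bookkeeping: realizing $M$ as a connected sum forces an orientation reversal across the gluing locus, and I must check that the induced automorphism of the $S^{m-n}$-bundle lands in the orientation-preserving class to which condition (\ref{thm:10.2}) applies. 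Here the two alternatives in (\ref{thm:10.3}) are what supply the needed flexibility, letting me place the standard disc on whichever of $M_1$ or $M_2$ makes $\Phi$ orientation preserving; a secondary check is that the product components noted above truly introduce no twisting.
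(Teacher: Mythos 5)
Your proposal is correct and follows essentially the same route as the paper: run the canonical combining operation of Proposition \ref{prop:1} (under (\ref{thm:10.3.1})) or of Proposition \ref{prop:2} (under (\ref{thm:10.3.2})), and then use condition (\ref{thm:10.2}) to isotope the gluing bundle isomorphism $\Phi:\partial V_2 \rightarrow \partial V_1$ to a product, so that the surrounding bundle of the glued map stays trivial. Your explicit decomposition of the surrounding bundle of $f$ into the two trivial pieces coming from $f_1$ and $f_2$, and the orientation bookkeeping you flag for $\Phi$, are in fact spelled out more carefully than in the paper's own proof, which simply asserts that $\phi$ and $\Phi$ can be chosen so that the result is $C^{\infty}$ trivial.
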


\begin{proof}
The condition (\ref{thm:10.3.1}) or (\ref{thm:10.3.2}) is assumed. Thus, by the assumption (\ref{thm:10.1}), we may apply
 the method of the proof of Proposition \ref{prop:1} (resp. \ref{prop:2}). We abuse notation in the proof of these Propositions such as manifolds $V_1$ and $V_2$, whose
 boundaries $\partial V_1$ and $\partial V_2$ have the structures of trivial $S^{m-n}$-bundles over $S^{n-1}$, and
 an isomorphism $\Phi:\partial V_2 \rightarrow \partial V_1$ between the bundles.

\ \ \ We can take an isomorphism $\Phi$ between
 the two $S^{m-n}$-bundles over $S^{n-1}$ inducing
 a diffeomorphism between the base spaces as in these proofs. Furthermore, for any
 diffeomorphism between the base spaces, we can take $\Phi$ inducing the diffeomorphism
 and by the assumption (\ref{thm:10.2}), such isomorphisms are always smoothly isotopic to
 the product of the diffeomorphism between
 the base spaces and a diffeomorphism between the fibers, which extends to a diffeomorphism between two standard closed
 discs of dimension $m-n+1$ (we regard the fibers as the boundaries of the standard closed discs here). 

\ \ \ By the constructions of the manifolds $M$ and maps $f$ in these proofs, we can realize any connected
 sum of the manifolds $M_1$ and $M_2$ as the resulting source manifold and we can take a diffeomorphism between
 the base space of the bundles $\phi$ and an isomorphism $\Phi:\partial V_2 \rightarrow \partial V_1$ between the bundles inducing the
 diffeomorphism $\phi$ so that the resulting map is a $C^{\infty}$ trivial round fold map from the connected sum $M$ into ${\mathbb{R}}^n$.   
\end{proof}

As specific cases, we have the following theorems.

\begin{Thm}
\label{thm:11}
Let $M_1$ and $M_2$ be closed and connected $m$-dimensional manifolds. Assume that two $C^{\infty}$ trivial round fold maps $f_1:M_1 \rightarrow {\mathbb{R}}^n$
 and $f_2:M_2 \rightarrow {\mathbb{R}}^n$ {\rm (}$m > n \geq 2${\rm )} exist. We also assume that $f_2(M_2)$ is diffeomorphic to $D^n$. 
Furthermore, suppose that one of the following two hold.
\begin{enumerate}
\item $n \geq 3$ and $m-n=1$.
\item $(m,n)=(5,3)$ or $(m,n)=(6,3)$ holds and the fibers of points in proper cores of $f_1$ and $f_2$ have connected components diffeomorphic to the standard sphere $S^{m-n}$.
\end{enumerate}

\ \ \ Then, on any manifold $M$ represented as a connected sum of the manifolds $M_1$ and $M_2$, we obtain a $C^{\infty}$ trivial
 round fold map $f:M \rightarrow {\mathbb{R}}^n$ by using a canonical combining operation to the pair $(f_1,f_2)$ of maps. 
\end{Thm}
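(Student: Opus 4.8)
The plan is to verify the three hypotheses \ref{thm:10.1}, \ref{thm:10.2} and \ref{thm:10.3} of Theorem \ref{thm:10} in each of the two listed cases and then to invoke that theorem directly; no new surgery is needed, since the canonical combining operation is already packaged there. So the whole task reduces to checking, for the given pairs $(m,n)$, that the general machine of Theorem \ref{thm:10} applies.

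First I would dispose of hypothesis \ref{thm:10.1}. In the second case it is part of the assumption. In the first case, where $m-n=1$, every regular fibre of a round fold map — in particular the fibre over a point of the proper core of $f_1$ — is a closed $1$-manifold, hence a disjoint union of copies of $S^1=S^{m-n}$, which supplies the required component.

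Next comes the heart of the matter, hypothesis \ref{thm:10.2}. An orientation preserving bundle automorphism of the trivial bundle $S^{n-1}\times S^{m-n}$ that is the identity on the base is exactly the datum of a map $S^{n-1}\to \mathrm{Diff}^{+}(S^{m-n})$, and it is smoothly isotopic to the identity precisely when this map is null-homotopic; since $\mathrm{Diff}^{+}(S^{m-n})$ is path-connected in all our cases, the obstruction lies in $\pi_{n-1}(\mathrm{Diff}^{+}(S^{m-n}))$. I would then compute it case by case: for $m-n=1$ one has $\mathrm{Diff}^{+}(S^1)\simeq SO(2)=S^1$, so the group is $\pi_{n-1}(S^1)=0$ as soon as $n\geq 3$; for $(m,n)=(5,3)$ Smale's theorem gives $\mathrm{Diff}^{+}(S^2)\simeq SO(3)$ with $\pi_2(SO(3))=0$; and for $(m,n)=(6,3)$ Hatcher's theorem gives $\mathrm{Diff}^{+}(S^3)\simeq SO(4)$ with $\pi_2(SO(4))=0$. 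I expect this to be the main obstacle, and in fact the reason the statement is confined to these cases: it is only for $m-n\leq 3$ that one has an effective grip on $\mathrm{Diff}^{+}(S^{m-n})$ through $SO(m-n+1)$, so the dimension restrictions are forced precisely here rather than being a convenience.

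Finally I would establish hypothesis \ref{thm:10.3} by verifying its first alternative \ref{thm:10.3.1} for $f_2$, and here the assumption $f_2(M_2)\cong D^n$ does the work. Because $f_2$ is $C^{\infty}$ trivial, its surrounding bundle is trivial, so for a proper core $P$ the outer region $M_2-{f_2}^{-1}(\mathrm{Int}\,P)$ is a product ${f_2}^{-1}(L)\times S^{n-1}$ in which the outermost singular sphere meets the slice $\{c_0\}\times S^{n-1}$, where $c_0={f_2}^{-1}(L)\cap{f_2}^{-1}(C)$; and because the image is a disc, the core region is a genuine product $D^n\times F_0$ with $F_0\neq\emptyset$. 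Sliding $c_0$ along a path in the connected fibre ${f_2}^{-1}(L)$ to a point $q$ of its boundary $F_0$ isotopes ${f_2}^{-1}(C)$ to $S^{n-1}\times\{q\}=\partial(D^n\times\{q\})$, which sits in a chart $D^n\times U\cong D^m$ and bounds the embedded disc $D^n\times\{q\}$; hence it is a trivial embedding, exactly as in the last conclusion of Theorem \ref{thm:4}. With \ref{thm:10.1}, \ref{thm:10.2} and \ref{thm:10.3} all verified, Theorem \ref{thm:10} delivers the desired $C^{\infty}$ trivial round fold map $f:M\to{\mathbb{R}}^n$, completing the argument.
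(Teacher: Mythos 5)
Your proposal is correct and takes essentially the same route as the paper: both arguments reduce the statement to Theorem \ref{thm:10} by checking its three hypotheses, using the homotopy equivalences $\mathrm{Diff}^{+}(S^{k})\simeq SO(k+1)$ for $k=1,2,3$ (via Smale and Hatcher) together with the vanishing of ${\pi}_{n-1}$ of these groups, and deriving the trivial-embedding condition \ref{thm:10.3.1} from the $C^{\infty}$ triviality of $f_2$ and the hypothesis $f_2(M_2)\cong D^n$. The only cosmetic difference is that you isotope the outermost singular sphere to a constant section by an explicit slide along the fiber over an axis, whereas the paper appeals to ${\pi}_{n-1}$ being zero to identify the embedding with a section of the trivial bundle over the boundary of a proper core and then caps it off over the disc.
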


\begin{proof}
 Let $Q$ be a proper core of $f_2$.
 
\ \ \ In the first case, regular fibers are always disjoint unions of finite copies of $S^1$. Note that the group
 of diffeomorphisms consisting of all the orientation preserving diffeomorphisms on $S^1$ has the same
 homotopy type as that of $SO(2)$ and $S^1$ and that the natural inclusion of $SO(2)$, which we regard as the group of all the linear transformations on $S^1$, into
 the group of the diffeomorphism gives a homotopy equivalence.
 For the connected component $C$ of $\partial f_2(M_2)$ bounding
 the unbounded connected component of ${\mathbb{R}}^n-{\rm Int} f_2(M_2)$, the
 embedding of the connected component ${f_2}^{-1}(C)$ of the singular set $S(f_2)$ into $M_2$ is smoothly isotopic to every
 section of the trivial bundle given by the submersion $f {\mid}_{{f_2}^{-1}(\partial Q)}:{f_2}^{-1}(\partial Q) \rightarrow \partial Q$ as a map into $M_2$
 by the assumptions that ${\pi}_{n-1}(S^1)$ is zero ($n \geq 3$ is assumed) and that $f_2$ is $C^{\infty}$ trivial.
 Then, the embedding of ${f_2}^{-1}(C)$ into $M_2$ is a trivial embedding
 into $M_2$ since the bundle given by the submersion $f_2 {\mid}_{{f_2}^{-1}(Q)}:{f_2}^{-1}(Q) \rightarrow Q$ is a trivial bundle over a standard closed disc whose fiber is diffeomorphic to a disjoint union of finite copies of $S^1$.

\ \ \ In the second case, the fiber of a point in a proper core of $f_2$ has a connected component diffeomorphic to the standard sphere $S^2$ or $S^3$.
 Note that the orientation preserving diffeomorphism group of $S^k$ (k=2,3) has the same
 homotopy type as that of $SO(k+1)$, and that the natural inclusion of $SO(k+1)$, which we regard
 as the group of all the linear transformations on $S^k$, respectively, into
 the orientation preserving diffeomorphism group gives a homotopy equivalence (see \cite{smale} for the $k=2$ case and see \cite{hatcher} for the $k=3$ case as mentioned in the proof of Theorem \ref{thm:6}). We
 also note that the groups ${\pi}_{n-1}(SO(3)) \cong {\pi}_2(SO(3))$ and ${\pi}_{n-1}(SO(4)) \cong {\pi}_2(SO(4))$ are
 zero. We obtain a fact similar to one in the first case. 

\ \ \ Thus, in both cases, all the assumptions of Theorem \ref{thm:10} are satisfied. This completes the proof. 
\end{proof}

\begin{Thm}
\label{thm:12}
Let $M_1$ and $M_2$ be closed and connected $m$-dimensional manifolds. Assume that two $C^{\infty}$ trivial round fold maps $f_1:M_1 \rightarrow {\mathbb{R}}^n$
 and $f_2:M_2 \rightarrow {\mathbb{R}}^n$ {\rm (}$m>n \geq 2${\rm )} exist. We
 also assume the following two.
\begin{enumerate}
\item
\label{thm:12.1}
 The fibers of points in proper cores of $f_1$ and $f_2$ have connected components diffeomorphic to $S^{m-n}$. 
\item
\label{thm:12.2}
 Isomorphisms on the trivial $S^{m-n}$-bundle over $S^{n-1}$ inducing the identity map on the base space $S^{n-1}$ are always smoothly isotopic to the identity map
 on the total space of the trivial bundle if they are orientation preserving diffeomorphisms on the total space.
\end{enumerate}
 Then, on any manifold $M$ represented as a connected sum of the manifolds $M_1$ and $M_2$, by a canonical combining operation to the pair $(f_1,f_2)$ of the maps, we obtain a $C^{\infty}$ trivial round fold
 map $f:M \rightarrow {\mathbb{R}}^n$ such that the fiber of a point in a proper core of $f$ has a connected component
 diffeomorphic to the standard sphere $S^{m-n}$.
\end{Thm}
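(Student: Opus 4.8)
The plan is to deduce the statement from Theorem~\ref{thm:10}, which already produces a $C^{\infty}$ trivial round fold map from a canonical combining operation once its three hypotheses hold. Hypotheses (\ref{thm:10.1}) and (\ref{thm:10.2}) are immediate here: (\ref{thm:10.1}) is contained in (\ref{thm:12.1}) applied to $f_1$, and (\ref{thm:10.2}) is literally (\ref{thm:12.2}). What is missing is the trivial--embedding hypothesis (\ref{thm:10.3}); neither (\ref{thm:10.3.1}) nor (\ref{thm:10.3.2}) is assumed in the present statement. The whole content of the proof will therefore be to \emph{manufacture} (\ref{thm:10.3}) out of the extra hypothesis (\ref{thm:12.1}) on $f_2$, after which Theorem~\ref{thm:10} applies verbatim and also delivers the $C^{\infty}$ triviality of $f$. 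The device for this is the auxiliary spun sphere already used in the proof of Theorem~\ref{thm:2}.

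First I would fix, by a trivial spinning construction as in the proof of Theorem~\ref{thm:2} and Example~\ref{ex:2}, a $C^{\infty}$ trivial round fold map $f_0:S^m \rightarrow {\mathbb{R}}^n$ whose proper-core fiber has a connected component diffeomorphic to $S^{m-n}$ and all of whose singular set components are trivially embedded in $S^m$; concretely $f_0$ is spun from a Morse function on $D^{m-n+1}$ with interior definite (index $0$) maxima, so that $S^m = (S^{m-n} \times D^n) \bigcup (D^{m-n+1} \times S^{n-1})$ and the distinguished proper-core sphere $S^{m-n} \times \{b\}$ bounds the disc $D^{m-n+1} \times \{b\}$, hence is trivially embedded as well. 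Next I would apply Theorem~\ref{thm:10} to the ordered pair $(f_0,f_2)$: hypothesis (\ref{thm:10.1}) holds for $f_0$ by construction, (\ref{thm:10.2}) is (\ref{thm:12.2}), and (\ref{thm:10.3.2}) holds because the proper-core sphere of the \emph{first} map $f_0$ is trivially embedded. This produces a $C^{\infty}$ trivial round fold map ${f_2}^{\prime}:M_2 \rightarrow {\mathbb{R}}^n$ on $S^m \# M_2 = M_2$ whose proper-core fiber still has an $S^{m-n}$ component (the proper core is inherited from the inner map $f_2$), and whose outermost singular set component ${f_2^{\prime}}^{-1}(C^{\prime})$ is now trivially embedded, since that component originates in the trivially embedded definite-fold sphere of the outer map $f_0$.

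Finally I would apply Theorem~\ref{thm:10} to the pair $(f_1,{f_2}^{\prime})$. Now (\ref{thm:10.1}) holds for $f_1$ by (\ref{thm:12.1}), (\ref{thm:10.2}) is (\ref{thm:12.2}), and (\ref{thm:10.3.1}) holds for ${f_2}^{\prime}$ by the previous step, so the theorem yields a $C^{\infty}$ trivial round fold map $f:M \rightarrow {\mathbb{R}}^n$ on $M_1 \# M_2 = M$ by a canonical combining operation. The extra assertion that the proper-core fiber of $f$ has an $S^{m-n}$ component is then automatic: over the proper core $f$ agrees, up to $C^{\infty}$ equivalence, with the inner map ${f_2}^{\prime}$, hence with $f_2$, which carries an $S^{m-n}$ component by (\ref{thm:12.1}). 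I expect the main obstacle to be the bookkeeping in the middle step --- verifying that after combining $(f_0,f_2)$ the outermost singular set component really is the trivially embedded fold sphere of $f_0$ and stays trivially embedded in $M_2$, while simultaneously checking that condition (\ref{thm:12.2}) lets one choose the gluing bundle isomorphism isotopic to a product, so that the surrounding bundle, and with it $C^{\infty}$ triviality, is preserved. Everything else is a direct citation of Theorem~\ref{thm:10}.
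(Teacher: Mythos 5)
Your reduction to Theorem~\ref{thm:10} is the right frame, and the paper does exactly that; the whole issue is how hypothesis (\ref{thm:10.3}) is produced, and there your argument has a genuine gap. The paper's proof verifies (\ref{thm:10.3.1}) \emph{directly} for $f_2$: since $f_2$ is $C^{\infty}$ trivial and, by (\ref{thm:12.1}), the fiber over a proper core point of $f_2$ has an $S^{m-n}$ component, the method of the proof of Theorem~\ref{thm:11} (using (\ref{thm:12.2}) to control the identifications) isotopes the outermost fold sphere ${f_2}^{-1}(C)$ through the trivial surrounding bundle to a section of the trivial $S^{m-n}$-bundle over $\partial Q$, which then caps off to a section over the proper core disc $Q$; hence ${f_2}^{-1}(C)$ lies in a chart and is trivially embedded. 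Your proposal never uses hypothesis (\ref{thm:12.1}) for $f_2$ or the $C^{\infty}$ triviality of $f_2$ for this purpose, which is a warning sign: these hypotheses are precisely what make ${f_2}^{-1}(C)$ trivially embedded.

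The failing step is the claim that the outermost singular component of ${f_2}^{\prime}$ is trivially embedded in $M_2$ ``since it originates in the trivially embedded definite-fold sphere of $f_0$.'' Trivial embedding in $S^m$ does not survive the combining operation applied to the pair $(f_0,f_2)$: that operation deletes from $S^m$ exactly the piece $V_1 \cong S^{m-n} \times D^n$ lying over the proper core of $f_0$, and $S^m - {\rm Int}\, V_1 \cong D^{m-n+1} \times S^{n-1}$. Inside this complement every fold sphere of $f_0$ is a \emph{section} $\{c\} \times S^{n-1}$, which is not even null-homotopic there; the contraction of such a sphere into a chart of $S^m$ necessarily passes through the deleted $S^{m-n} \times D^n$. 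After gluing, $S^m - {\rm Int}\, V_1$ is identified with the reglued copy of $V_2 = {f_2}^{-1}(N(C))$ in $M_2$, so the outermost fold sphere of ${f_2}^{\prime}$ becomes a section of the trivial $D^{m-n+1}$-bundle $V_2 \rightarrow C$ and is therefore isotopic in $M_2$ to ${f_2}^{-1}(C)$ itself. Thus your intermediate claim is \emph{equivalent} to the condition (\ref{thm:10.3.1}) you are trying to manufacture, and the argument is circular. (Note the contrast with the proof of Theorem~\ref{thm:2}, where $f_0$ is the \emph{inner} map of the pair, so only the top cap $V_2 \subset S^m$ is removed and the remaining fold spheres can still be contracted through the retained bottom $S^{m-n} \times D^n$.) The repair is to abandon the auxiliary sphere and prove, as the paper does, that ${f_2}^{-1}(C)$ is already trivially embedded in $M_2$.
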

\begin{proof}
By the assumption (\ref{thm:12.2}), for the boundary $C$ of the image $f_2(M)$ of $f_2$, which is diffeomorphic to $D^n$ by the
 assumption (\ref{thm:12.1}) and the assumption that $M_2$ is connected, by using a method similar
 to that of the proof of Theorem \ref{thm:11}, we can show that the embedding of ${f_2}^{-1}(C)$ into $M_2$
 is a trivial embedding
 into $M_2$. Thus, all the assumptions of Theorem \ref{thm:10} are satisfied. This completes the proof.
\end{proof}

\begin{Rem}
In the situation of Theorem \ref{thm:12}, we can obtain a $C^{\infty}$ trivial round fold map on any manifold represented as a
 connected sum $M$ of the manifolds $M_1$ and $M_2$ also by using a method
 performed in the proof of Theorem \ref{thm:2}. In this case, the resulting map is different from the map obtained in the proof above.
\end{Rem}

\begin{Ex}
\label{ex:3}
By Theorem \ref{thm:10}, \ref{thm:11} or \ref{thm:12}, the proof of the theorem and Example \ref{ex:2}, a manifold represented
 as a connected sum of $S^n \times S^1$ and another ($n+1$)-dimensional manifold admitting a $C^{\infty}$ trivial
 round fold map into ${\mathbb{R}}^n$ whose image is diffeomorphic to $D^n$ admits a $C^{\infty}$ trivial round fold map
 whose image is diffeomorphic to $D^n$ again where $n \geq 3$ is assumed. In addition, a manifold represented
 as a connected sum of $S^2 \times S^3$ ($S^3 \times S^3$) and another $5$-dimensional (resp. $6$-dimensional) manifold admitting a $C^{\infty}$ trivial
 round fold map into ${\mathbb{R}}^3$ whose image is diffeomorphic to $D^3$ and the fiber of a point in a proper core of which has a
 connected component diffeomorphic to $S^2$ (resp. $S^3$) admits a $C^{\infty}$ trivial round fold map
 whose image is diffeomorphic to $D^3$ and the fiber of a point in a proper core of which has a
 connected component diffeomorphic to $S^2$ (resp. $S^3$) again. 

\ \ \ For example, manifolds represented as connected
 sums of finite copies of $S^n \times S^1$ admit $C^{\infty}$ trivial round fold maps into ${\mathbb{R}}^n$ satisfying all the conditions
 of Theorem \ref{thm:5} (\ref{thm:5.1}) where $n \geq 3$ is assumed. In addition, manifolds represented as
 connected sums of finite copies of $S^3 \times S^2$ and ones represented
 as connected sums of finite copies of $S^3 \times S^3$ admit similar maps into ${\mathbb{R}}^3$.  
\end{Ex}

\end{document}